\documentclass[a4paper,fleqn]{cas-sc}

\usepackage[authoryear,longnamesfirst]{natbib}
\usepackage{verbatim}
\usepackage{lscape}
\usepackage{amsmath}
\usepackage{amssymb}
\usepackage{esint}
\usepackage{multirow}
\usepackage{hhline}
\usepackage{tabularx}
\usepackage{mathtools}
\usepackage{float}
\usepackage{setspace}
\usepackage{mathtools}
\usepackage{enumitem}
\usepackage[dvipsnames]{xcolor}
\usepackage{algpseudocode}

\def\tsc#1{\csdef{#1}{\textsc{\lowercase{#1}}\xspace}}
\tsc{WGM}
\tsc{QE}

\newtheorem{theorem}{Theorem}
\newdefinition{remark}{Remark}
\newdefinition{algorithm}{Algorithm}
\newproof{proof}{Proof}

\begin{document}
\let\WriteBookmarks\relax
\def\floatpagepagefraction{1}
\def\textpagefraction{.001}

\shorttitle{Local Adaptive Kernel Methods - Poisson Equation}

\shortauthors{J. A. Reeger et~al.}

\title [mode = title]{Adaptivity in Local Kernel Based Methods for Approximating Solutions to the Poisson Equation}

\tnotemark[1]

\tnotetext[1]{This work was supported by the project ``Kernel Methods with Machine Learning and Adaptivity" sponsored by the Air Force Office of Scientific Research and by the Joint Directed Energy Transition Office.}

%

\author[1]{Jonah A. Reeger}

\cormark[1]


\ead{jonah.reeger.2@us.af.mil}


\credit{Conceptualization, Methodology, Software, Formal Analysis, Validation, Writing-Original Draft, Writing-Review \& Editing, Funding Acquisition}

\affiliation[1]{organization={Department of Mathematics and Statistics, Air Force Institute of Technology},
            addressline={2950 Hobson Way},
            city={Wright-Patterson Air Force Base},
            postcode={45305},
            state={OH},
            country={USA}}

\author[1]{Anders R. Johnson}


\ead{anders.johnson.4@us.af.mil}


\credit{Software, Formal Analysis, Investigation}


\author[3]{Shelby W. Woodrum}


\ead{shelby.woodrum@us.af.mil}


\credit{Investigation, Software, Validation}

\affiliation[3]{organization={Department of Mathematical Science, United States Air Force Academy},
            addressline={2345 Fairchild Drive},
            city={USAFA},
            postcode={80840},
            state={CO},
            country={USA}}

\cortext[1]{Corresponding Author}



\begin{abstract}
Expanding on the recent development of adaptive local kernel methods for approximating the action of linear operators, a local estimate of the error and an adaptive procedure for approximating solutions to the Poisson equation is developed. The error estimate is used in the midst of the adaptive procedure to determine locations in the solution domain where decreasing the spacing between nodes can decrease the error in the solution.  The approach described here is essentially ``meshless", with only local Delaunay triangulations leveraged as a convenience to provide information on the local geometry of the node set.  Once this information is utilized, it is discarded.  The experiments performed show close agreement between the error estimate and actual absolute forward error in the approximate solution along with a comparison to existing indicators for refinement.  The combination of the adaptive procedure and error estimate provides an automated technique to resolve localized features of a solution without the, often intractable, expense of uniform refinement across the entire solution domain.
\end{abstract}



\begin{keywords}
Kernel Methods \sep Radial Basis Function \sep Adaptivity
\end{keywords}

\maketitle

\section{Introduction}
This article concerns the development of an error estimate and node refinement technique for approximating solutions to the Poisson equation that adapt to localized features of the solution.  These methods utilize local kernel-based interpolation, supplemented by polynomials, in the context of collocation.  An algorithm combining the error estimate and refinement techniques is described and used to demonstrate the performance of the estimate as an approximation to the actual forward error.  This work builds upon adaptive methods for approximating the action of linear operators in \cite{JAR2024}, which rely on a basis that includes shifts of a chosen conditionally positive definite kernel and a set of shifted monomials that span a space of polynomials up to a certain degree. The effectiveness of local kernel-based methods has been thoroughly demonstrated, often through the idea of radial basis function generated finite differences (RBF-FD), in the approximation of solutions to PDEs and the approximate evaluation of definite integrals \cite{FHNWW14,JGJAR2022,Hardy,Kansa,JAR2020,JAR2022,JARBF2016,JARBF2017,JARBFMLW2016,ASMV06}.

A hallmark of kernel-based methods is their ability to obtain tunable (often high) orders of accuracy, while allowing for nearly arbitrary configurations of the computational node set, including on domains with complex geometries, without maintaining a mesh \cite{BFNF2015b}.  A mesh provides information regarding connections between nodes, like edges in a triangulation. Many other approaches for solving PDEs require a mesh of the computational node set, often with restrictions on the quality of the mesh, or that the node set feature restrictive uniformity. These restrictions introduce often complicated approaches when considering the inclusion of ``$h$-adaptivity" in a process for solving a partial differential equation.  This ``$h$-adaptivity" refers to the increase or decrease in node density to improve accuracy and/or efficiency of the algorithm, and is often necessary when the solution to a PDE exhibits localized features that cannot be resolved without densely packed nodes near the features.  In this case, uniform refinement across the entire domain provides one approach to resolving the localized features; however, this tends to be intractable based on the availability of computational resources.  Instead, local increases in node density can allow for greater resolution only where it is necessary in the computational domain.  This is the approach taken here.

Locally adaptive approaches to solving PDEs require a method for determining locations in the solution domain, often the computational nodes, where the current approximation may be a poor representation of the solution.  In the context of kernel-based approximations, these refinement indicators come in many forms (see \cite{JSlakPhD} for a more detailed discussion), e.g.,
\begin{itemize}
    \item the size of a computed quantity, the magnitude of derivatives of (or approximations of derivatives of) computed quantities, or the detection of an interface \cite{DOBRAVEC202277,KosekSarler,DAVYDOV2011287,OANH2017474,OANH2022123}, or
    \item a norm of the residual that measures how well the current solution satisfies a discrete version of the governing equations \cite{LiZhaiWengFeng,TothDuster2023,9854342}.
\end{itemize}
The refinement indicator developed and analyzed in this work is instead an error estimator similar to the one used in \cite{JAR2024}, that relies on the difference of two separate approximations, exhibiting different orders of accuracy, in the spirit of the adaptive trapezoidal rule for numerical quadrature (see, e.g., \cite{atkinson}).

Without stringent restrictions on the uniformity of the node set or the need to maintain a mesh, local kernel-based approximations open up opportunities to explore novel strategies for introducing nodes to the computational node set (in locations identified by the refinement indicator).   Therefore, a strategy is introduced that leverages geometric information about the node set from locally constructed Delaunay triangulations, then immediately discards them.

A generic mathematical description of the problem, that applies to more than just the Poisson equation,  is presented in section \ref{sec:MathDescription}.  Then, local kernel-based approximations are introduced in section \ref{sec:Interpolation} and applied to the process of solving PDEs in section \ref{sec:PDESolution}. Next, the error estimator that is used as a refinement indicator in the adaptive approach proposed here is developed in sections \ref{sec:ErrorApprox} and \ref{sec:Error_Estimation}.  This error estimator is then utilized in the adaptive algorithm presented in section \ref{sec:Implementation} with experimental computational results in section \ref{sec:Experiments}.  Finally, conclusions are given in section \ref{sec:Conclusions}.  A Matlab implementation of the method is available at Matlab Central's File Exchange \cite{AdaptiveRBFPoissonCodeMatlab}.

\section{Mathematical Description of the Problem and Some Notation} \label{sec:MathDescription}

Consider the problem of determining the solution $u:\Omega\mapsto\mathbb{R}$, $\Omega\subset\mathbb{R}^{d}$, of
\begin{align}
    (\mathcal{L}u)(\mathbf{x})=f(\mathbf{x}),\quad \mathbf{x}\in\Omega \label{eq:PDEexact}
\end{align}
where $\mathcal{L}$ is a linear operator and $f:\Omega\mapsto\mathbb{R}$.  Multi-indices (see, e.g., \cite{hunter2001applied} section 11.1), which are ordered tuples that are here written as vectors in $\mathbb{Z}^{d}$, are used for convenience in describing ideas that apply regardless of the value of $d$.  For instance, employing multi-index notation, the action of $\mathcal{L}$ is defined by
\begin{align}
    (\mathcal{L}u)(\mathbf{x}) = \sum\limits_{|\boldsymbol{\alpha}'|\leq m}c_{\boldsymbol{\alpha}'}(\mathbf{x})(\partial^{\boldsymbol{\alpha}'}u)(\mathbf{x}),\nonumber
\end{align}
with $c_{\boldsymbol{\alpha}'}:\Omega\to\mathbb{R}$.   Note that given a vector $\mathbf{z}\in\mathbb{R}^{\eta}$, the notation $[\mathbf{z}]_{j}$, $j=1,2,\ldots,\eta$, will indicate the $j\textsuperscript{th}$ component of $\mathbf{z}$.  With this in mind, the multi-index $\boldsymbol\alpha\in\mathbb{Z}^{d}$ satisfies $[\boldsymbol{\alpha}]_{j}\geq0$, $j=1,2,\ldots,d$ and has order $|\boldsymbol\alpha|\vcentcolon=\sum_{j=1}^{d}[\boldsymbol{\alpha}]_{j}$.  The factorial of a multi-index is $\boldsymbol{\alpha}!\vcentcolon=\prod_{j=1}^{d}\left([\boldsymbol{\alpha}]_{j}!\right)$ and a partial ordering can be placed on multi-indices $\boldsymbol{\alpha},\boldsymbol{\alpha}'\in\mathbb{Z}^{d}$ by, for instance, $\boldsymbol{\alpha}'\leq\boldsymbol{\alpha}$ when $[\boldsymbol{\alpha}']_{j}\leq[\boldsymbol{\alpha}]_{j}$, for all $j=1,2,\ldots,d$. When considering dimension $d$, there are $M_{d,m}\vcentcolon=((m+d)!)(m!d!)$ unique multi-indices up to order $m$.  Further, partial derivatives can be expressed concisely with this notation as
\begin{align}           \partial^{\boldsymbol{\alpha}}\vcentcolon=\prod\limits_{j=1}^{d}\frac{\partial^{[\boldsymbol{\alpha}]_{j}}}{\partial [\mathbf{x}]_{j}^{[\boldsymbol{\alpha}]_{j}}},\nonumber
\end{align}
which will be useful when deriving the error estimate used in the adaptive refinement method developed herein.
In certain cases, it will be helpful to index the set of all multi-indices up to a given order.  For instance, let $\{\boldsymbol{\alpha}_{l}\}_{l=1}^{M_{d,m}} = \{\boldsymbol{\alpha}\in\mathbb{Z}^{d}\vcentcolon\lvert\boldsymbol{\alpha}\rvert\leq m\}$, where $\lvert\boldsymbol{\alpha}_{l'}\rvert\leq\lvert\boldsymbol{\alpha}_{l}\rvert$ when $l'\leq l$, and so that all multi-indices in the set of the same order are sorted lexicographically.

It is useful to think of the coefficients $c_{\boldsymbol{\alpha}'}(\mathbf{x})$ being defined piecewise, and analogously $f$ as a piecewise function, to simplify discussion.  That is, the action of $\mathcal{L}$ on $u(\mathbf{x})$ will depend on the location of $\mathbf{x}$ within $\Omega$.  On the interior of $\Omega$, the Laplace operator,
\begin{align}
   (\mathcal{L}u)(\mathbf{x}) = (\Delta u)(\mathbf{x}) = \sum\limits_{j=1}^{d}\left(\partial^{2\mathbf{e}_{j}}u\right)(\mathbf{x}),\nonumber
\end{align}
will be applied in all numerical experiments, with $\mathbf{e}_{j}\in\mathbb{R}^{d}$ the $j\textsuperscript{th}$ column of the $d\times d$ identity matrix.  In this case, $c_{\boldsymbol{\alpha}'}(\mathbf{x})=1$ if $\boldsymbol{\alpha}'=2\mathbf{e}_{i}$ and $0$ otherwise.   On the boundary of the domain, $\partial\Omega$, certain linear boundary conditions will apply and there the action of $\mathcal{L}$ may be, e.g.,
\begin{itemize}
    \item the identity map, with $c_{\boldsymbol{\alpha}'}(\mathbf{x})=1$ if $\boldsymbol{\alpha}'=\mathbf{0}_{d}$ (the zero vector in $\mathbb{R}^{d}$) and $0$ otherwise, so that
\begin{align}
    (\mathcal{L}u)(\mathbf{x}) = u(\mathbf{x}) \nonumber
\end{align}
in the case of Dirichlet boundary conditions, or
\item a directional derivative, with $c_{\boldsymbol{\alpha}'}(\mathbf{x})=[\mathbf{n}(\mathbf{x})]_{i}$ if $\boldsymbol{\alpha}'=\mathbf{e}_{i}$ and $0$ otherwise, so that
\begin{align}
    (\mathcal{L}u)(\mathbf{x}) = \sum\limits_{i=1}^{d}[\mathbf{n}(\mathbf{x})]_{i}\left(\frac{\partial}{\partial [\mathbf{x}]_{i}}u\right)(\mathbf{x}), \nonumber
\end{align}
where $\mathbf{n}:\mathbb{R}^{d}\mapsto\mathbb{R}^{d}$ satisfying $\lVert\mathbf{n}(\mathbf{x})\rVert_{2}=1$, in the case of Neumann boundary conditions (used in the case of test function $u_{4}$ in section \ref{sec:Experiments}).
\end{itemize}
In any case, only a certain subset of the coefficients $c_{\boldsymbol{\alpha}'}$ are nonzero.

\section{Local Kernel Approximations Via Interpolation} \label{sec:Interpolation}

On the domain $\Omega$, an approximate value of $u$ is sought for each element in a set $\mathcal{S}=\left\{\mathbf{x}_{k}\right\}_{k=1}^{N}$ of $N$ unique computational nodes.  To each point $\mathbf{x}_{k}\in\mathcal{S}$ associate a set $\mathcal{N}_{k,n_{k}}=\left\{\mathbf{x}_{k,j}\right\}_{j=1}^{n_{k}}$ containing the $n_{k}\ll N$ points in $\mathcal{S}$ nearest to $\mathbf{x}_{k}$.  The nodes in this set are assumed to be ordered so that $\lVert\mathbf{x}_{k,i}-\mathbf{x}_{k}\rVert_{2}\leq\lVert\mathbf{x}_{k,j}-\mathbf{x}_{k}\rVert_{2}$, when $i<j$.  The approximations are then chosen to be local, so that the function $u$ is represented by a different approximating function, $s_{k,n_{k},m}[u]$, at each $\mathbf{x}_{k}$.  Here, and elsewhere in this work, including  a function in brackets (e.g., $s_{k,n,m}[u]$) when describing, for example, an interpolating function or set of coefficients explicitly indicates that these objects are dependent on the function in brackets.  The approximating function will be a linear combination of basis functions that match $u$ exactly at each point in the set $\mathcal{N}_{k,n_{k}}$. While the value of $n_{k}$ need not be the same for each $k$, e.g., with larger values near domain boundaries beneficial to mitigate errors similar to those introduced by the Runge phenomenon \cite{JARBF2017}, the computational experiments will choose $n_{k}=n$ for all $k$ with no apparent adverse effects. Further, the accuracy of the local approximation of $u$ near $\mathbf{x}_{k}$ relies on the value $h_{k,n}$, a characteristic spacing for the set $\mathcal{N}_{k,n}$.  This value could be the maximum or minimum (used here) distance from $\mathbf{x}_{k}$ to any other point in $\mathcal{N}_{k,n}$ or
\begin{align}
    \max\limits_{j=1,2,\ldots,n}\min\limits_{\begin{array}{c}i=1,2,\ldots,n\\i\neq j\end{array}}\lVert\mathbf{x}_{i}-\mathbf{x}_{j}\rVert_{2},\nonumber
\end{align}
which approximates the so-called ``fill distance" when node sets are quasi-uniformly spaced. In any case, the value of $h_{k,n}$ should decrease when new computational nodes are added to $\mathcal{S}$ near $\mathbf{x}_{k}$.  While the points in $\mathcal{N}_{k,n}$ need not be those nearest to $\mathbf{x}_{k}$, other choices may increase the value of $h_{k,n}$ with impacts on the accuracy in the approximation of $u$ at $\mathbf{x}_{k}$.

\subsection{Expressing the Interpolant}

The local approximations chosen here are linear combinations of (conditionally) positive definite kernels, $\varphi$, evaluated at the set of center points $\mathcal{N}_{k,n}$, i.e.,
\begin{align}
\phi_{k,n,j}(\mathbf{x})\vcentcolon=\varphi\left(\left\lVert \mathbf{x}-\mathbf{x}_{k,j}\right\rVert_{2}\right), j=1,2,\ldots,n.\nonumber
\end{align}
In the conditionally positive definite case, multivariate polynomial terms up to total degree $m$ are introduced to supplement the kernel basis as a guarantee for the existence of unique solutions to the resulting interpolation problem \cite{HW2005}.   The existence and uniqueness result requires satisfaction of the following conditions:
\begin{enumerate}
    \item the kernel $\varphi$ is conditionally positive-definite of order $m+1$, and
    \item the set $\mathcal{N}_{k,n}$ is unisolvent on the space, $\mathbb{P}_{m}^{d}$, of $d$-variate polynomials up to degree $m$.
\end{enumerate}

A (shifted) polynomial basis is used to improve numerical stability and consists of the functions (for all $\boldsymbol{\alpha}$ with $\lvert\boldsymbol{\alpha}\rvert\leq m$)
\begin{align}
    \pi_{k,\boldsymbol{\alpha}}(\mathbf{x})=(\mathbf{x}-\mathbf{x}_{k})^{\boldsymbol\alpha}\vcentcolon=\prod\limits_{j=1}^{d}[(\mathbf{x}-\mathbf{x}_{k})]_{j}^{[\boldsymbol{\alpha}]_{j}}. \nonumber
\end{align}
Note that multiple subscripting, with, e.g., $k$, $n$, $m$ and $\boldsymbol{\alpha}$, is used here and throughout this work to remind the reader of the dependence of certain quantities on the many parameters.

The interpolant is constructed as
\begin{align}
s_{k,n,m}[u](\mathbf{x})\vcentcolon=\sum_{j=1}^{n}\lambda_{k,n,m,j}[u]\phi_{k,n,j}\left(\mathbf{x}\right)+\sum_{|\boldsymbol{\alpha}|\leq m}\gamma_{k,n,m,\boldsymbol{\alpha}}[u]\pi_{k,\boldsymbol{\alpha}}(\mathbf{x}).\nonumber
\end{align}
with the coefficients $\lambda_{k,n,m,j}[u]$, $j=1,2,\ldots,n$, and $\gamma_{k,n,m,\boldsymbol{\alpha}}[u]$, $|\boldsymbol{\alpha}|\leq m$, chosen to first satisfy the $n$ interpolation conditions
\begin{align}
    s_{k,n,m}[u](\mathbf{x}_{k,j})=u(\mathbf{x}_{k,j}),\quad j=1,2,\ldots,n.\label{eq:interpcond}
\end{align}
This system of equations is under-determined, requiring a further $M_{d,m}$ equations to uniquely define all $n+M_{d,m}$ coefficients.  For the remainder of this work, only the kernel $\varphi(r)=r^{\rho}$, $0<\rho\in\mathbb{Z}$ and odd, will be considered.  When $m\geq\lceil\rho/2\rceil-1$, imposing
\begin{align}
    \sum_{j=1}^{n}\lambda_{k,n,m,j}[u]\pi_{k,\boldsymbol{\alpha}}(\mathbf{x}_{k,j})=0,\label{eq:CPD}
\end{align}
for all multi-indicies with $|\boldsymbol{\alpha}|\leq m$ guarantees the existence of a unique set of coefficients when interpolating any function $u$ as long as conditions 1 and 2 are also satisfied (see, e.g., \cite{HW2005}).

The equations \eqref{eq:interpcond} and \eqref{eq:CPD} can be summarized in the system of linear equations
\begin{align}
S_{k,n,m}\left[\begin{array}{c}\boldsymbol{\lambda}_{k,n,m}[u] \\ \boldsymbol{\gamma}_{k,n,m}[u]\end{array}\right]=\left[\begin{array}{c}\mathbf{u}_{k,n} \\ \mathbf{0}_{M_{d,m}}\end{array}\right].\nonumber
\end{align}
Here, $[\boldsymbol{\lambda}_{k,n,m}[u]]_{j}=\lambda_{k,n,m,j}[u]$, $[\boldsymbol{\gamma}_{k,n,m}[u]]_{l}=\gamma_{k,n,m,\boldsymbol{\alpha}_{l}}[u]$ and
$[\mathbf{u}_{k,n}]_{j}=u(\mathbf{x}_{k,j})$.
Further, the partitioned matrix
\begin{align}
S_{k,n,m}=\left[\begin{array}{cc}\Phi_{k,n} & P_{k,n,m}\\P_{k,n,m}^{T} & 0_{M_{d,m}\times M_{d,m}}\end{array}\right]\label{eq:InterpLinSys}
\end{align}
has blocks with entries, $[\Phi_{k,n}]_{ij}=\phi_{k,n,j}\left(\mathbf{x}_{k,i}\right)$ and $[P_{k,n,m}]_{il}=\pi_{k,\boldsymbol{\alpha}_{l}}(\mathbf{x}_{k,i})$, all with $i,j=1,2,\ldots,n$ and $l=1,2,\ldots,M_{d,m}$.

Alternatively, the interpolant can be expressed in a more convenient form for the presentation of theoretical results. That is,
\begin{align}
 s_{k,n,m}[u](\mathbf{x})= \sum\limits_{j=1}^{n}\psi_{k,n,m,j}(\mathbf{x})u(\mathbf{x}_{k,j}),\label{eq:cardinterp}
\end{align}
where the new set of functions satisfy the cardinal property
\begin{align}
    \psi_{k,n,m,j}(\mathbf{x}_{k,i})=\left\{\begin{array}{cc} 1 & i=j \\ 0 & i\neq j\end{array}\right..\nonumber
\end{align}
Equating the two forms of the interpolant produces the relationship
\begin{align}
    S_{k,n,m}\left[\begin{array}{c}\boldsymbol{\psi}_{k,n,m}(\mathbf{x})\\\boldsymbol{\xi}_{k,n,m}(\mathbf{x})\end{array}\right]=\left[\begin{array}{c}\boldsymbol{\phi}_{k,n}(\mathbf{x})\\\boldsymbol{\pi}_{k,m}(\mathbf{x})\end{array}\right]\label{eq:cardinalrelationship}
\end{align}
with $[\boldsymbol{\phi}_{k,n}(\mathbf{x})]_{j}=\phi_{k,n,j}(\mathbf{x})$ and  $[\boldsymbol{\pi}_{k,m}(\mathbf{x})]_{l}=\pi_{k,\boldsymbol{\alpha}_{l}}(\mathbf{x})$ and $[\boldsymbol{\psi}_{k,n,m}(\mathbf{x})]_{j}=\psi_{k,n,m,j}(\mathbf{x})$ vectors with the basis functions as entries.  The vector $\boldsymbol{\xi}_{k,n,m}(\mathbf{x})$ is not needed when forming the interpolant; however, its entries can be interpreted as Lagrange multipliers in a related quadratic program (see, e.g., \cite{VB2019})

\section{Approximating Solutions to the PDE} \label{sec:PDESolution}

When expressed in cardinal form it is clear that the interpolation conditions \eqref{eq:interpcond} are immediately satisfied.  However, this only indicates that the interpolant matches $u$ at the points in $\mathcal{N}_{k,n}$.  To understand the behavior of the interpolant at other values of $\mathbf{x}$, it is shown in \cite{JAR2024} that if conditions 1 and 2 are satisfied and $u$ has continuous mixed partial derivatives up to order $m+1$ in a convex neighborhood of $\mathbf{x}_{k}$ containing $\mathcal{N}_{k,n}$ and $\mathbf{x}$, then the point-wise error in the kernel based interpolant $s_{k,n,m}[u]$ is
\begin{align}
    s_{k,n,m}[u](\mathbf{x})-u(\mathbf{x})=s_{k,n,m}[R_{m}[u]](\mathbf{x})-R_{m}[u](\mathbf{x})\label{eq:interperror}
\end{align}
where $R_{m}[u](\mathbf{x})$ is the Taylor remainder term expressible as \cite{CartanDC,TrenchRA}
\begin{align}
R_{m}[u](\mathbf{x})=\sum\limits_{|\boldsymbol{\alpha}|=m+1}\frac{m+1}{\boldsymbol\alpha!}I_{k,\boldsymbol{\alpha}}[u](\mathbf{x})(\mathbf{x}-\mathbf{x}_{k})^{\boldsymbol\alpha},\nonumber
\end{align}
with
\begin{align}
    I_{k,\boldsymbol{\alpha}}[u](\mathbf{x})=\int\limits_{0}^{1}\partial^{\boldsymbol\alpha}u(\mathbf{y})|_{\mathbf{y}=\mathbf{x}_{k}+t(\mathbf{x}-\mathbf{x}_{k})}(1-t)^{|\boldsymbol{\alpha}|-1}dt.\nonumber
\end{align}
This leads to a convenient expression for the action of $\mathcal{L}$ on $ s_{k,n,m}[u](\mathbf{x})$, which can be exploited to approximate $u(\mathbf{x}_{k})$.
\begin{theorem} \label{thm:Lapprox}
     Suppose that conditions 1 and 2 are satisfied and that $u$ has continuous mixed partial derivatives up to order $m+1$ in a convex neighborhood of $\mathbf{x}_{k}$ containing $\mathcal{N}_{k,n}$ and $\mathbf{x}$, then
    \begin{align}
    \sum\limits_{j=1}^{n}(\mathcal{L}\psi_{k,n,m,j})(\mathbf{x}_{k})u(\mathbf{x}_{k,j})=f(\mathbf{x}_{k})+(\mathcal{L}s_{k,n,m}[R_{m}[u]])(\mathbf{x}_{k}).\label{eq:PDExkreduced}
    \end{align}
\end{theorem}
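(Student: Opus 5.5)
The plan is to apply $\mathcal{L}$ to the pointwise interpolation error identity \eqref{eq:interperror}, evaluate at $\mathbf{x}=\mathbf{x}_{k}$, and then use the cardinal form \eqref{eq:cardinterp} together with the PDE \eqref{eq:PDEexact}.

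First, justify applying $\mathcal{L}$ to both sides of \eqref{eq:interperror}. The identity holds for every $\mathbf{x}$ in the convex neighborhood of $\mathbf{x}_{k}$ where $u$ has continuous mixed partials up to order $m+1$, so it is an identity of functions on an open set. Both $s_{k,n,m}[u]$ and $s_{k,n,m}[R_{m}[u]]$ are finite linear combinations of $\phi_{k,n,j}$ and polynomials. Each $\phi_{k,n,j}$ is $r^{\rho}$ with $\rho$ odd, and we need $\rho$ large enough relative to the order $m'$ of $\mathcal{L}$ that $\mathcal{L}\phi_{k,n,j}$ exists at $\mathbf{x}_{k}$, which is one of the centers. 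This is the only delicate point, and I will make this standing smoothness assumption explicit. Applying $\mathcal{L}$ and evaluating at $\mathbf{x}_{k}$ gives
\begin{align}
(\mathcal{L}s_{k,n,m}[u])(\mathbf{x}_{k})-(\mathcal{L}u)(\mathbf{x}_{k})=(\mathcal{L}s_{k,n,m}[R_{m}[u]])(\mathbf{x}_{k})-(\mathcal{L}R_{m}[u])(\mathbf{x}_{k}).\nonumber
\end{align}

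Second, show $(\mathcal{L}R_{m}[u])(\mathbf{x}_{k})=0$. Since $R_{m}[u]=u-T_{m}[u]$, where $T_{m}[u]$ is the degree-$m$ Taylor polynomial about $\mathbf{x}_{k}$, every partial derivative $\partial^{\boldsymbol{\alpha}'}R_{m}[u]$ with $|\boldsymbol{\alpha}'|\le m$ vanishes at $\mathbf{x}_{k}$. Because $\mathcal{L}$ involves only derivatives of order at most its order (which we require to be $\le m$, e.g.\ $m\ge 2$ for the Laplacian), the term vanishes. Equivalently, each summand of $R_{m}[u]$ carries the factor $(\mathbf{x}-\mathbf{x}_{k})^{\boldsymbol{\alpha}}$ with $|\boldsymbol{\alpha}|=m+1$. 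Differentiating this at most $m$ times, via the product rule with the integral factor, always leaves a positive power of $(\mathbf{x}-\mathbf{x}_{k})$, and that power vanishes at $\mathbf{x}_{k}$. I expect the main bookkeeping effort to be here: one must either use the representation $u-T_{m}[u]$ directly or differentiate under the integral, and the required order condition on $\mathcal{L}$ versus $m$ must be stated.

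Third, rewrite the left side. By linearity of $\mathcal{L}$ and the cardinal form \eqref{eq:cardinterp},
\begin{align}
(\mathcal{L}s_{k,n,m}[u])(\mathbf{x}_{k})=\sum_{j=1}^{n}(\mathcal{L}\psi_{k,n,m,j})(\mathbf{x}_{k})u(\mathbf{x}_{k,j}).\nonumber
\end{align}
The $\psi_{k,n,m,j}$ are themselves combinations of the kernel and polynomial basis via \eqref{eq:cardinalrelationship}, so $\mathcal{L}$ may be applied to them. Substituting $(\mathcal{L}u)(\mathbf{x}_{k})=f(\mathbf{x}_{k})$ from \eqref{eq:PDEexact} and rearranging yields \eqref{eq:PDExkreduced}.
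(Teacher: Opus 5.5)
Your proof is correct and essentially the paper's: the paper just cites equation 11 of \cite{DS18}, the error identity for polynomially exact differentiation formulas, which is exactly the polynomial-reproduction-plus-Taylor-remainder computation you write out; and you are right to make the order condition explicit, since for $\rho=3$, $m=1$, $\mathcal{L}=\Delta$ (which conditions 1 and 2 allow) the left side equals $(\mathcal{L}s_{k,n,m}[R_{m}[u]])(\mathbf{x}_{k})$, so the stated identity fails whenever $f(\mathbf{x}_{k})\neq0$. In Step 2, use only $R_{m}[u]=u-T_{m}[u]$; the product-rule variant requires differentiating $I_{k,\boldsymbol{\alpha}}[u]$, which needs derivatives of $u$ of order above $m+1$, and those are not assumed.
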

\begin{proof}
This follows directly from equation 11 of \cite{DS18} by noting that $(\mathcal{L}u)(\mathbf{x}_{k})=f(\mathbf{x}_{k})$.
\end{proof}

Notice that since $\mathcal{N}_{k,n}\subset\mathcal{S}$, the equations \eqref{eq:PDExkreduced} for $k=1,2,\ldots,N$ can be combined into a system of linear equations $D_{n,m}\mathbf{u}=\mathbf{f}+\mathbf{r}_{n,m}$ with $[\mathbf{u}]_{k}=u(x_{k})$, $[\mathbf{f}]_{k}=f(x_{k})$, $[\mathbf{r}_{n,m}]_{k}=(\mathcal{L}s_{k,n,m}[R_{m}[u]])(\mathbf{x}_{k})$ and
\begin{align}
    [D_{n,m}]_{ki} = \left\{\begin{array}{cc}(\mathcal{L}\psi_{k,n,m,j})(\mathbf{x}_{k}), & \exists \mbox{ }j\in\{1,2,\ldots,n\}\mbox{ such that } \mathbf{x}_{i}=\mathbf{x}_{k,j} \\ 0, & \mbox{otherwise}\end{array}\right..\nonumber
\end{align}
As long as $D_{n,m}$ is invertible, which is not guaranteed but often realized in practice, the values of $u(\mathbf{x}_{k})$ can be written $\mathbf{u}=D_{n,m}^{-1}\mathbf{f}+D_{n,m}^{-1}\mathbf{r}_{n,m}$.  Define $\mathbf{u}_{n,m}=D_{n,m}^{-1}\mathbf{f}$ so that $[\mathbf{u}_{n,m}]_{k}$ is an approximation for $u(\mathbf{x}_{k})$ with error given by
\begin{align}
    [\mathbf{u}-\mathbf{u}_{n,m}]_{k}=[D_{n,m}^{-1}\mathbf{r}_{n,m}]_{k}.\label{eq:forwarderror}
\end{align}

\section{Approximating the Error} \label{sec:ErrorApprox}

To understand the suitability of $\mathbf{u}_{n,m}$ as an approximation for $\mathbf{u}$ it is useful to analyze the entries of $D_{n,m}^{-1}\mathbf{r}_{n,m}$.  The next two sections analyze the terms in this product and their relationship to $h_{k,n}$.

\subsection{Applying the Operator to the Cardinal Basis} \label{sec:CardinalApprox}

The entries of $\mathbf{r}_{n,m}$ are given by
\begin{align}
 \left(\mathcal{L}s_{k,n,m}[R_{m}[u]]\right)(\mathbf{x}_{k})= \sum\limits_{|\boldsymbol{\alpha}|=m+1}\sum\limits_{j=1}^{n}\left(\mathcal{L}\psi_{k,n,m,j}\right)(\mathbf{x}_{k})\frac{m+1}{\boldsymbol{\alpha}!}I_{k,\boldsymbol{\alpha}}[u](\mathbf{x}_{k,j})(\mathbf{x}_{k,j}-\mathbf{x}_{k})^{\boldsymbol{\alpha}}.\nonumber
\end{align}
Expressing each $\mathbf{x}_{k,j}\in\mathcal{N}_{k,n}$ as $\mathbf{x}_{k,j}=\mathbf{x}_{k}+h_{k,n}\mathbf{v}_{k,j}$, where $\mathbf{v}_{k,j}$ encodes the direction from $\mathbf{x}_{k}$ to $\mathbf{x}_{k,j}$, first reveals
\begin{align}
    \left(\mathbf{x}_{k,j}-\mathbf{x}_{k}\right)^{\boldsymbol{\alpha}}=h_{k,n}^{|\boldsymbol{\alpha}|}\mathbf{v}_{k,j}^{\boldsymbol{\alpha}}.\nonumber
\end{align}
Further, suppose that conditions 1 and 2 are satisfied.  Then from lemma 3.2 of \cite{Iske}
\begin{align}
    (\mathcal{L}\psi_{k,n,m,j})(\mathbf{x}_{k})=O(h_{k,n}^{-\lvert\boldsymbol{\alpha}_{k,*}\rvert}),\nonumber
\end{align}
as $h_{k,n}\to0$, with $\lvert\boldsymbol{\alpha}_{k,*}\rvert$ the largest $\lvert\boldsymbol{\alpha}'\rvert$ such that $c_{\boldsymbol{\alpha}'}(\mathbf{x}_{k})\neq0$.  Therefore,
\begin{align}
    [\mathbf{r}_{n,m}]_{k} = O(h_{k,n}^{m+1-\lvert\boldsymbol{\alpha}_{k,*}\rvert})
\end{align}

\begin{remark} \label{rem:orientedlimit}
    The limit as $h_{k,n}\to0$ assumes that the orientation of the nodes in $\mathcal{N}_{k,n}$ is maintained.  That is, in the definition $\mathbf{x}_{k,j}=\mathbf{x}_{k}+h_{k,n}\mathbf{v}_{k,j}$ the vector $\mathbf{v}_{k,j}$ is constant with respect $h_{k,j}$.  However, in practice adaptive techniques, like the one described in the following sections, often do not rely on such uniform refinement, but still realize improvements in accuracy with decreases in $h_{k,n}$ that are similarly dependent on the polynomial order $m$.  This is demonstrated in section \ref{sec:Experiments} and in \cite{JAR2024}.
\end{remark}

\begin{remark}
    Consider the matrix
\begin{align}
    \tilde{S}_{k,n,m}=&\left[\begin{array}{cc} h_{k,n}^{-\rho}I_{n} & 0_{n\times M_{d,m}} \\ 0_{M_{d,m}\times n} & H_{k,n,m}^{-1}\end{array}\right]S_{k,n,m}\left[\begin{array}{cc} I_{n} & 0_{n\times M_{d,m}} \\ 0_{M_{d,m}\times n} & h_{k,n}^{\rho}H_{k,n,m}^{-1}\end{array}\right] \nonumber\\
    =& \left[\begin{array}{cc}h_{k,n}^{-\rho}\Phi_{k,n} & h_{k,n}^{-\rho}P_{k,n,m}h_{k,n}^{\rho}H_{k,n,m}^{-1}\\H_{k,n,m}^{-1}P_{k,n,m}^{T} & 0_{M_{d,m}\times M_{d,m}}\end{array}\right], \nonumber
\end{align}
where
\begin{align}
    H_{k,n,m}=\left[\begin{array}{cccc}
    h_{k,n}^{\lvert\boldsymbol{\alpha}_{1}\rvert} & & &\\
     & h_{k,n}^{\lvert\boldsymbol{\alpha}_{2}\rvert} & & \\
     & & \ddots & \\
     & & & h_{k,n}^{\lvert\boldsymbol{\alpha}_{M_{d,m}}\rvert}\end{array}\right].\nonumber
\end{align}
The system of equations
\begin{align}
\tilde{S}_{k,n,m}\left[\begin{array}{c}(\mathcal{L}\boldsymbol{\psi}_{k,n,m})(\mathbf{x}_{k})\\h_{k,n}^{-\rho}H_{k,n,m}(\mathcal{L}\boldsymbol{\xi}_{k,n,m})(\mathbf{x}_{k})\end{array}\right]=\left[\begin{array}{c}h_{k,n}^{-\rho}(\mathcal{L}\boldsymbol{\phi}_{k,n})(\mathbf{x}_{k})\\H_{k,n,m}^{-1}(\mathcal{L}\boldsymbol{\pi}_{k,m})(\mathbf{x}_{k})\end{array}\right],\label{eq:weightrelationshipscaled}
\end{align}
which is equivalent to \eqref{eq:cardinalrelationship} evaluated at $\mathbf{x}_{k}$, is solved to determine $(\mathcal{L}\boldsymbol{\psi}_{k,n,m})(\mathbf{x}_{k})$ to promote numerical stability \cite{Iske}.
\end{remark}

\subsection{The Inverse of $D_{n,m}$} \label{sec:MatInverse}

The results of section \ref{sec:CardinalApprox} further indicate that in $D_{n,m}$ the nonzero entries in the $k\textsuperscript{th}$ row are are $O(h_{k,n}^{-\lvert\boldsymbol{\alpha}_{k,*}\rvert})$.  Although $D_{n,m}$ is sparse, $D_{n,m}^{-1}$, when it exists, is full so that every entry of the forward error vector is a linear combination of all entries of $\mathbf{r}_{n,m}$.  Still, the goal here is to create a method that benefits from local refinement.  Ideally, local decreases in $h_{k,n}$ should translate to decreases in the modulus of the $k\textsuperscript{th}$ entry of the forward error vector, even when $h_{j,n}$, $j\neq k$, does not change.  Introducing the $N\times N$ diagonal matrix $G_{n}$ with diagonal entries $[G_{n}]_{kk} = h_{k,n}^{\lvert\boldsymbol{\alpha}_{k,*}\rvert}$, notice that
\begin{align}
    D_{n,m}^{-1} =  (G_{n}D_{n,m})^{-1}G_{n}.\nonumber
\end{align}
Further, combing the definition of $G_{n}$ with the results of section \ref{sec:CardinalApprox}, $[G_{n}\mathbf{r}_{n,m}]_{k}=O(h_{k,n}^{m+1})$ as $h_{k,n}\to0$

Given the dependence of $[G_{n}\mathbf{r}_{n,m}]_{k}$ on $h_{k,n}$ and the desire to construct a method that decreases the error in $[\mathbf{u}_{n,m}]_{k}$ by local changes in the density of the nodes, it would be beneficial to demonstrate that $\lvert[(G_{n}D_{n,m})^{-1}]_{ki}\rvert$ decreases rapidly as $\lVert\mathbf{x}_{i}-\mathbf{x}_{k}\rVert_{2}$ increases.  In the absence of proof, computational demonstrations in section \ref{sec:EntryDecay} illustrate that this is the case, at least for $\mathcal{L}=\Delta$ for $\mathbf{x}$ on the interior of $\Omega$.

\section{An Error Estimate and Existing Refinement Indicators} \label{sec:Error_Estimation}

To construct a method for approximating solutions to \eqref{eq:PDEexact} that automatically adapts to rapidly changing features in the solution, a quantity that reveals the need for refinement must be available.  This quantity will be referred to as a refinement indicator.  Further, it is useful if this quantity estimates the actual error in the solution, and in such a case the refinement indicator is called an error estimate.  To this end, notice that the results of section \ref{sec:PDESolution} could be analogously derived with $m$ replaced by $m+\mu$, with $\mu\in\mathbb{Z}$ and $\mu\geq1$. Defining now $\mathbf{u}_{m+\mu}=D_{m+\mu}^{-1}\mathbf{f}$, the difference between the two approximations yields
\begin{align}
    \mathbf{u}_{n,m+\mu}-\mathbf{u}_{n,m} = \mathbf{u}-\mathbf{u}_{n,m} +D_{m+\mu}^{-1}\mathbf{r}_{m+\mu}.\label{eq:errorestimator}
\end{align}
That is, the $k\textsuperscript{th}$ entry of the left side of \eqref{eq:errorestimator} provides an estimate of the error \eqref{eq:forwarderror}.

While \eqref{eq:errorestimator} provides an estimate of the error at each point in $\mathcal{S}$, there are other methods described in the literature that indicate where refinement of a node set should occur to improve the accuracy in the solution.  Two of these are described below.  In any case, these refinement indicators will be collected in a vector $\mathbf{u}_{\mbox{ref}}$ for which the $k\textsuperscript{th}$ entry will indicate the need for refinement in the node set around $\mathbf{x}_{k}$.  For instance, considering \eqref{eq:errorestimator} $\mathbf{u}_{\mbox{ref}}=\mathbf{u}_{\mbox{est}}\vcentcolon=\lvert\mathbf{u}_{n,m+\mu}-\mathbf{u}_{n,m}\rvert$.

Since refinement indicators are often intended to capture rapidly changing features in an unknown quantity, there have been several works that introduce an approximation of the gradient to identify such locations.  Utilizing the gradient as a refinement indicator poses some demonstrated problems when considering subsets of the domain where the solution has large curvature or is relatively flat \cite{OANH2017474}.  Therefore, the refinement indicator presented in \cite{OANH2017474} instead compares the local change in the solution at a subset of points (relative to the value of the solution at $\mathbf{x}_{k}$) to the change in a linear least squares fit of the solution at the same set of points (relative to the least squares fit evaluated at $\mathbf{x}_{k}$). This idea is adapted here to be compatible with the node refinement strategy discussed in algorithm \ref{alg:barynodeadd}. That is, the plane $\mathcal{P}_{k}(\mathbf{x})$ that minimizes
\begin{align}    \sum\limits_{j=1}^{n}\left(\mathbf{w}_{k,j}^{T}\mathbf{u}_{n,m}-\mathcal{P}_{k}(\mathbf{x}_{k,j})\right)^{2}\nonumber
\end{align}
is first determined by solving a linear least square problem with
\begin{align}
    [\mathbf{w}_{k,j}]_{l}=\left\{\begin{array}{cc}1,& \mathbf{x}_{k,j}=\mathbf{x}_{l}\\
    0, & \mbox{otherwise}\end{array}\right.,\nonumber
\end{align}
encoding the mapping between nodes in $\mathcal{S}$ and $\mathcal{N}_{k,n}$. The refinement indicator is then defined to have entries
\begin{align}
    [\mathbf{u}_{\mbox{ref}}]_{k} =[\mathbf{u}_{\mbox{ls}}]_{k}\vcentcolon=\max\limits_{j=1,2,\ldots,\tilde{n}}\left\lvert(\mathbf{w}_{k,j}^{T}\mathbf{u}_{n,m}-[\mathbf{u}_{n,m}]_{k})-(\mathcal{P}_{k}(\mathbf{x}_{k,j})-\mathcal{P}_{k}(\mathbf{x}_{k}))\right\rvert.\nonumber
\end{align}
for $\tilde{n}< n$.  In the experiments in this work $\tilde{n}=6$ has been chosen based on computational explorations and to best align with the idea presented in \cite{OANH2017474}.

Alternatively, other methods base refinement on the residual of a discrete approximation as in \cite{KosekSarler}.  In such a case, matrices $D_{n,m}$ and $D_{n,m+\mu}$ are both constructed, but only $\mathbf{u}_{n,m}$ is determined via the implicit solution of the system of equations $D_{n,m}\mathbf{u}_{n,m}=\mathbf{f}$.  Then the vector of refinement indicators is defined as
\begin{align}
\mathbf{u}_{\mbox{ref}}=\mathbf{u}_{\mbox{res}}\vcentcolon=\lvert D_{n,m+\mu}\mathbf{u}_{n,m}-\mathbf{f}\rvert.\nonumber
\end{align}
Since $D_{n,m+\mu}$ provides a more accurate approximation for the action of $\mathcal{L}$, this difference measures how well $\mathbf{u}_{n,m}$ satisfies the equations at higher order.

\section{An Implementation of Adaptive Local Kernel Approximations} \label{sec:Implementation}

The results developed in the previous sections warrant computational exploration both to demonstrate important aspects that are not shown (e.g., the decay of the entries of $(G_{n}D_{n,m})^{-1})$) and to compare the error estimate with existing refinement indicators.  These explorations are completed using the algorithms detailed in the following subsections.

\subsection{An Algorithm for Adaptive Kernel Based Approximation} \label{sec:Naive_Algorithm}

The adaptive algorithm utilized in this work is modified from the algorithm presented in \cite{JAR2024}.  It can be summarized concisely in three general steps:
\begin{enumerate}
\item given the current node set, $\mathcal{S}^{(l)}$, approximate the solution $\mathbf{u}_{n,m}^{(l)}$ (and, possibly, $\mathbf{u}_{n,m+\mu}^{(l)}$),
\item construct the vector of refinement indicators, $\mathbf{u}_{\mbox{ref}}^{(l)}$, and
\item add nodes to $\mathcal{S}$ near $\mathbf{x}_{k}$ for each $k$ where $\left\lvert[\mathbf{u}_{\mbox{ref}}^{(l)}]_{k}\right\rvert$ is too large.
\end{enumerate}
Note that here the superscript $(l)$ is introduced to indicate the number of steps taken in the outer loop of the adaptive algorithm \ref{alg:akba}, which provides more detail on each of these steps.
\begin{algorithm}\label{alg:akba}
    Adaptive kernel based approximation of solution to \eqref{eq:PDEexact} (modified from \cite{JAR2024})
\begin{algorithmic}[1]
    \State Given a set of $N^{(0)}$ nodes, $\mathcal{S}^{(0)}=\{\mathbf{x}_{k}^{(0)}\}_{k=1}^{N^{(0)}}$, the parameters $n$, $m$ and $\rho$ (and, if required, $\mu$) for constructing the local interpolants \eqref{eq:cardinterp}, a desired tolerance, $\varepsilon$, and a maximum number of refinement levels, $l_{\mbox{max}}$.
    \State Set $l=0$.
    \While{$l\leq l_{\mbox{max}}$}
        \For{$k\in\{1,2\ldots,N^{(l)}\}$}
        \State Determine the $n$ points in $S^{(l)}$ nearest to $\mathbf{x}_{k}^{(l)}$. Call this set of points $\mathcal{N}_{k,n}^{(l)}$.
        \If{$l=0$ OR there is an index $i$ such that $\mathbf{x}_{k}^{(l)}=\mathbf{x}_{i}^{(l-1)}$ and $\mathcal{N}_{k,n}^{(l)}\setminus\mathcal{N}_{i,n}^{(l-1)}\neq\emptyset$} \Comment{For $l=0$, approximations for the action of $\mathcal{L}$ will occur for every node in $\mathcal{S}^{(0)}$.  For $l>1$, the approximate action will be computed for nodes added at the previous step and recomputed for nodes that existed at the previous level for which the nearest neighbor set has changed.}
        \State Compute $(\mathcal{L}\psi_{k,n,m,j}^{(l)})(\mathbf{x}_{k}^{(l)})$ and, if necessary, $(\mathcal{L}\psi_{k,n,m+\mu,j}^{(l)})(\mathbf{x}_{k}^{(l)})$, $j=1,2,\ldots,n$ utilizing the nodes in $\mathcal{N}_{k,n}^{(l)}$. \Comment{Update the approximations where recomputation is necessary at this level.}
        \Else
        \State Set $(\mathcal{L}\psi_{k,n,m,j}^{(l)})(\mathbf{x}_{k}^{(l)})=(\mathcal{L}\psi_{i,n,m,j}^{(l-1)})(\mathbf{x}_{i}^{(l-1)})$ (and, if necessary, $(\mathcal{L}\psi_{k,n,m+\mu,j}^{(l)})(\mathbf{x}_{k}^{(l)})=(\mathcal{L}\psi_{i,n,m+\mu,j}^{(l-1)})(\mathbf{x}_{i}^{(l-1)})$) $j=1,2,\ldots,n$\Comment{Retain the existing approximation where recomputation is unnecessary.}
        \EndIf
        \EndFor
        \State Construct $D_{n,m}^{(l)}$ and solve $D_{n,m}^{(l)}\mathbf{u}_{n,m}^{(l)}=\mathbf{f}^{(l)}$ for $\mathbf{u}_{n,m}^{(l)}$.  If required, also construct $D_{n,m+\mu}^{(l)}$. Likewise, if required, solve $D_{n,m+\mu}^{(l)}\mathbf{u}_{n,m+\mu}^{(l)}=\mathbf{f}^{(l)}$ for $\mathbf{u}_{n,m+\mu}^{(l)}$
        \State Set $\mathcal{S}^{(l+1)}=\mathcal{S}^{(l)}$ \Comment{Initialize the node set for the next level of refinement.}
        \For{$k\in\{1,2\ldots,N^{(l)}\}$}
        \If{$\left\lvert[\mathbf{u}_{\mbox{ref}}^{(l)}]_{k}\right\lvert>\varepsilon$} \Comment{Determine if the refinement indicator is too large.}
        \State Add new nodes to the set $\mathcal{S}^{(l+1)}$ in a neighborhood of $\mathbf{x}_{k}^{(l)}$.
        \EndIf
        \EndFor
        \If{$\mathcal{S}^{(l+1)}\setminus\mathcal{S}^{(l)}=\emptyset$}
        \State Break from all loops. \Comment{No refinement occurred so stop the adaptive procedure.}
        \EndIf
    \EndWhile
    \State Set $L=l$ and return $\mathbf{u}_{n,m}^{(L)}$ as the approximation for $\mathbf{u}$. \Comment{Record the number of levels required to reach the desired tolerance and accept the current approximation of $\mathbf{u}$.}
\end{algorithmic}
\end{algorithm}

\subsection{Solution of the Large Systems of Linear Equations}

Line 10 of algorithm \ref{alg:akba} requires solving up to two independent systems of linear equations.  These can be handled in parallel, reducing the cost in terms of computation time to that of a single system of linear equations.  With $n\ll N^{(l)}$, these systems of equations are large (with $D_{n,m}^{(l)}$ and $D_{n,m+\mu}^{(l)}$ both $N^{(l)}\times N^{(l)}$) and sparse (with $nN^{(l)}$ nonzero entries).  Further, the results of section \ref{sec:CardinalApprox} indicate that the nonzero entries of row $k$ of these matrices have entries that are $O((h_{k,n}^{(l)})^{-\lvert\boldsymbol{\alpha}_{k,*}\rvert})$.  Algorithm \ref{alg:akba} only introduces new nodes where the refinement indicator is too large, so that as the number, $l$, of refinement levels increases, the values of $h_{k,n}^{(l)}$ may vary drastically dependent on $k$.  This can lead to poorly conditioned systems of linear equations.  To improve conditioning, $G_{n}^{(l)}D_{n,m}^{(l)}\mathbf{u}_{n,m}^{(l)}=G_{n}^{(l)}\mathbf{f}^{(l)}$ (likewise, $G_{n}^{(l)}D_{n,m+\mu}^{(l)}\mathbf{u}_{n,m+\mu}^{(l)}=G_{n}^{(l)}\mathbf{f}^{(l)}$, if necessary) is instead solved for $\mathbf{u}_{n,m}^{(l)}$.  To further improve stability, a sparse $LU$ factorization of $G_{n}^{(l)}D_{m,n}^{(l)}$ is constructed using row- and column-pivoting and scaling, implemented using Matlab's \verb+lu+ command.

\subsection{Node Refinement Via Local Delaunay Triangulations}

A desirable property of kernel based methods, like those considered here, is that a mesh defining connections between the nodes in $\mathcal{S}^{(l+1)}$ need not be maintained, rendering these methods ``meshless".  Still, certain strategies for constructing a mesh on a given set of points provide useful geometric information about the node set that can be leveraged to promote the satisfaction of condition 2 and to improve numerical stability.  For instance, the Delaunay triangulation of a point set in $\mathbb{R}^{2}$ defines a set of triangles that maximizes the smallest of the three angles of each triangle \cite{CompGeomDelaunay}.  This is useful when attempting to determine locations for nodes to add to $\mathcal{S}^{(l+1)}$ that are both near $\mathbf{x}_{k}$ and a sufficient distance from other nodes already in the set.  These properties are useful in an attempt to preserve local quasi-uniformity in the node set under refinement.

Where the need for refinement is indicated, computational experiments have illustrated that the addition of shifted barycenters (midpoints) of the triangles in a Delaunay triangulation is effective.  Specifically, when $\lvert[\mathbf{u}_{\mbox{ref}}^{(l)}]_{k}\rvert>\varepsilon$, for each triangle $t$ in the Delaunay triangulation of the node set $\mathcal{S}^{(l)}$ that has $\mathbf{x}_{k}$ as a vertex the midpoint of $t$ is perturbed by a small amount in a random direction and added to $\mathcal{S}^{(l+1)}$.  The addition of barycenters ensures that the new nodes are not too close to the vertices of the triangles, which are already in $\mathcal{S}^{(l+1)}$, and the favorable properties of the Delaunay triangulation promote the separation of the barycenters added near $\mathbf{x}_{k}$. Further, perturbation of the barycenters guards against issues in satisfying requirements of unisolvency of the node set indicated by condition 2.

The approach just described assumes that a mesh, the Delaunay triangulation of the entire node set, is constructed and maintained.  To avoid the need to maintain this mesh, algorithm \ref{alg:barynodeadd} constructs Delaunay triangulations $\mathcal{T}_{k,n'}^{(l)}$ on sets $\mathcal{N}_{k,n'}^{(l)}$ (with $n'\geq n$) of points in $\mathcal{S}^{(l)}$ nearest to $\mathbf{x}_{k}$ when $\lvert[\mathbf{u}_{\mbox{ref}}^{(l)}]_{k}\rvert>\varepsilon$.  Then perturbed barycenters of the triangles in $\mathcal{T}_{k,n'}^{(l)}$ with $\mathbf{x}_{k}$ as a vertex are added to $\mathcal{S}^{(l+1)}$ and the triangulation is discarded.  This is a process that is easily parallelized.  In section \ref{sec:Experiments} all computational experiments use $n'=100$.

Notice that this procedure of adding perturbed barycenters of triangles cannot introduce new nodes on the boundary of the domain.  Therefore, algorithm \ref{alg:barynodeadd} describes a process where, if a perturbed barycenter for a particular triangle is added to the node set and two of the triangle's vertices are on the boundary, then a perturbed midpoint of the edge whose vertices are on the boundary is also added to $\mathcal{S}^{(l+1)}$.  In some cases, these perturbed midpoints may not fall exactly on the boundary, specifically when the boundary is not a straight line.  So, an extra step of projecting the perturbed midpoint onto the boundary is also included in this algorithm.  Computational observations revealed that in certain cases this procedure added new nodes to the boundary too often, leading to numerical instabilities with nodes too close together.  To avoid this complication, the shifted midpoint is only added to $\mathcal{S}^{(l+1)}$ if the length of the edge is a large enough fraction of the average length of the edges of the triangle.

\begin{algorithm}\label{alg:barynodeadd}
    Addition of nodes as randomly shifted barycenters of a local Delaunay triangulation and boundary edges
    \begin{algorithmic}[1]
        \State Given a node $\mathbf{x}_k\in \mathcal{S}^{(l)}$ that is marked for refinement, a set $\mathcal{N}_{k,n'}^{(l)}$ of the $n'\geq n$ nearest neighbors to $\mathbf{x}_k$ in $\mathcal{S}^{(l)}$, $\vartheta\ll1$, and $\zeta\in(0,1]$.
        \State Let $\mathcal{T}_{k,n'}^{(l)}\vcentcolon=\{t_{j}\}_{j=1}^{J}$ be the Delaunay triangulation of $\mathcal{N}_{k,n'}^{(l)}$.\Comment{Construct the Delaunay triangulation.}
        \State Let $j_{1},j_{2},\ldots,j_{\eta}\subset\{1,2,\ldots,J\}$ be the indices of the triangles in $\mathcal{T}_{k,n'}^{(l)}$ for which $\mathbf{x}_{k}$ is a vertex.
            \For{$\nu=1,2,\ldots,\eta$}
                \State Set $r_1 = 1/3 + \vartheta \beta_{1}$, $r_2 = 1/3 + \vartheta \beta_{2}$ and $r_{3}=1-r_{1}-r_{2}$ with $\beta_{1}$ and $\beta_{2}$ chosen from the random uniform distribution on $(-1,1)$.
                \State Add $r_1 \mathbf{x}_{\nu,1} + r_2 \mathbf{x}_{\nu,2} + r_3 \mathbf{x}_{\nu,3}$ to $\mathcal{S}^{(l+1)}$, where $\mathbf{x}_{\nu,i}\in\mathcal{N}_{k,n'}^{(l)}$, $i=1,2,3$ are the coordinates of the vertices $t_{j_{\nu}}$
            \EndFor
        \State Let $\{\mathcal{E}_{\gamma}\}_{\gamma=1}^{\Gamma}$ be the set of distinct edges of all triangles in $\{t_{j_{\nu}}\}_{\nu=1}^{\eta}$ for which $\mathbf{x}_{k}$ is a vertex.  Denote the other vertex on each edge as $\mathbf{x}_{\gamma}$.
        \For{$\gamma=1,2,\ldots,\Gamma$}
            \If{$\mathbf{x}_{\gamma}\in\partial \Omega$}
                \If{there exists an edge in the set of distinct edges of all triangles in $\{t_{j_{\nu}}\}_{\nu=1}^{\eta}$ for which $\mathbf{x}_{\gamma}$ and the other vertex on the edge, denoted $\mathbf{x}_{\gamma'}$, is also both on the boundary and a vertex of an edge in $\{\mathcal{E}_{\gamma}\}_{\gamma=1}^{\Gamma}$}
                    \If{$\lVert\mathbf{x}_{\gamma}-\mathbf{x}_{\gamma'}\rVert_{2}\geq\zeta\left(\lVert\mathbf{x}_{\gamma}-\mathbf{x}_{\gamma'}\rVert_{2}+\lVert\mathbf{x}_{\gamma}-\mathbf{x}_{\gamma''}\rVert_{2}+\lVert\mathbf{x}_{\gamma'}-\mathbf{x}_{\gamma''}\rVert_{2}\right)$, with $\mathbf{x}_{\gamma''}$ the third vertex of the triangle for which $\mathbf{x}_{\gamma}$ and $\mathbf{x}_{\gamma''}$ are the vertex of an edge.\Comment{Avoid adding nodes to edges that are already much smaller than those nearby. }}
                        \State Compute $\mathbf{x}_{\gamma}+(1/2+\vartheta\beta)(\mathbf{x}_{\gamma'}-\mathbf{x}_{\gamma})$, with $\beta$ chosen from the random uniform distribution on $(-1,1)$, and project it to the closest point on the boundary.  Add this projected point to $\mathcal{S}^{(l+1)}$. \Comment{On boundaries that are not straight lines, the shifted midpoints will no longer be on the boundary.  So, project the shifted midpoint to the boundary before adding the point to $\mathcal{S}^{(l+1)}$.}
                    \EndIf
                \EndIf
            \EndIf
        \EndFor
   \end{algorithmic}
\end{algorithm}

Delaunay triangulations of node sets are unique, except when it is possible for more than three nodes to fall on the same circumcircle of a triangle in the triangulation \cite{CompGeomDelaunay}. This implies that the same triangle is likely appear in many different sets $\mathcal{T}_{k,n'}^{(l)}$. To take advantage of parallel computing, a perturbed barycenter (and, possibly, one or more perturbed edge midpoints) of a particular triangle is added to $\mathcal{S}^{(l+1)}$ for each of its vertices indicated for refinement.  So, after completion of lines 12 through 14 of \ref{alg:akba}, there are likely nodes in $\mathcal{S}^{(l+1)}$ that are too close together.  To remedy this unfortunate consequence of using parallel computing, nodes can be removed from the set through the use of clustering algorithms and choosing one node for each cluster.  The implementation used to generate the results in section \ref{sec:Experiments} takes advantage of Matlab's \verb+uniquetol+ command with a tolerance larger than the parameter specifying the size of the random perturbation in algorithm \ref{alg:barynodeadd} yet smaller than the expected minimum expected distance between nodes when algorithm \ref{alg:akba} terminates.

\section{Computational Experiments} \label{sec:Experiments}

The algorithms described in section \ref{sec:Implementation} are applied to a set of prototypical examples in the following subsections through a series of computational experiments.  These examples are designed to highlight the utility of the error estimator described in section \ref{sec:Error_Estimation} as a refinement indicator when compared to the other existing indicators described in that section.

\subsection{Test Functions, Domains and Parameter Selections for Computational Experiments}

The performance of algorithm \ref{alg:akba} utilizing algorithm \ref{alg:barynodeadd} at line 14 is demonstrated on four separate test functions chosen to be solutions of \eqref{eq:PDEexact} when $\mathcal{L}=\Delta$ for $\mathbf{x}$ on the interior of $\Omega$.  The first three test functions are considered on the domain $\Omega_{1}=\{\mathbf{x}\in\mathbb{R}^{2}:\lVert\mathbf{x}-0.5\cdot\mathbf{1}\rVert_{\infty}\leq0.5\}$.  This domain is the unit square centered at $(0.5,0.5)$ and it has an area of 1. On this domain $\mathcal{L}$ is the identity map for $\mathbf{x}$ on the boundary to impose Dirichlet boundary conditions.  Further, the test functions
\begin{align}
    u_{1}(\mathbf{x}) = \sum\limits_{i=1}^{2d}\frac{1}{1+a\left\lVert\mathbf{x}-\mathbf{y}_{i}\right\rVert_{2}^{2}}\nonumber
\end{align}
and
\begin{align}
    u_{2}(\mathbf{x}) = \sum\limits_{i=1}^{2d}\exp(-a\left\lVert\mathbf{x}-\mathbf{y}_{i}\right\rVert_{2}^{2}),\nonumber
\end{align}
both with $\mathbf{y}_{1}=\left[\begin{array}{cc}0.66  & 0.89\end{array}\right]^{T}$, $\mathbf{y}_{2}=\left[\begin{array}{cc}0.74 & 0.02\end{array}\right]^{T}$, $\mathbf{y}_{3}=\left[\begin{array}{cc}0.09 &  0.86\end{array}\right]^{T}$, $\mathbf{y}_{4}=\left[\begin{array}{cc}0.24 & 0.08\end{array}\right]^{T}$, and
\begin{align}
    u_{3}(\mathbf{x}) = \ln(a\lVert\mathbf{x}+0.01\cdot\mathbf{1}\rVert_{2}^{2})+1\nonumber
\end{align}
are each considered in turn.  Graphically, $u_{1}$ and $u_{2}$ are similar with features that become more localized as $a$ increases.  However, \cite{JAR2024} highlights that $u_{1}$
can be expressed as a power series with radius of convergence $\left\lVert\mathbf{x}-\mathbf{y}\right\rVert_{2}<1/\sqrt{a}$ and terms that grow for all $j$ when outside the radius of convergence, while the power series for $u_{2}$ has infinite radius of convergence and terms that grow only until finite $j$ (for each fixed $\left\lVert\mathbf{x}-\mathbf{y}\right\rVert_{2}$).  On the other hand, $u_{3}$ features a singularity just outside the domain.  In all cases shown here, $a=1000$.

The fourth test integrand is
\begin{align}
    u_{4}(\mathbf{x}) = \lVert\mathbf{x}-\mathbf{y}_{0}\rVert_{2}^{\varrho}\sin\left(\frac{1}{2}\theta(\mathbf{x}-\mathbf{y}_{0})\right), \nonumber
\end{align}
where
\begin{align}
    \theta(\mathbf{x}) = \tan^{-1}\left(\frac{\mathbf{e}_{2}^{T}R\mathbf{x}}{\mathbf{e}_{1}^{T}R\mathbf{x}}\right) \nonumber
\end{align}
with $\mathbf{e}_{j}$ the $j\textsuperscript{th}$ column of the $2\times2$ identity matrix and
\begin{align}
    R = \sqrt{2}/2\left[\begin{array}{cc}-1 & 1\\-1 &-1\end{array}\right].\nonumber
\end{align}
This test integrand features a singularity at $\mathbf{y}_{0}$ in partial derivatives $\partial^{\boldsymbol{\alpha}}$ for which $\varrho/2<\lvert\boldsymbol{\alpha}\rvert$, which violates the assumptions required to express the interpolation error as \eqref{eq:interperror} when $m+1>\varrho/2$.  The domain for the fourth test function is
\begin{align}
    \Omega_{2} = \left\{\mathbf{x}\in\mathbb{R}^{2}\vcentcolon \max\left(\lVert\mathbf{x}\rVert_{2}-\sqrt{\frac{4}{3\pi}},\sqrt{\frac{4}{3\pi}}-\left\lVert\mathbf{x}-\sqrt{\frac{4}{3\pi}}\left[\begin{array}{c}1 \\ -1\end{array}\right]\right\rVert_{\infty}\right)\leq0\right\},\nonumber
\end{align}
which is the circle of radius $\sqrt{4/(3\pi)}$ with the fourth quadrant deleted.  This domain also has an area of 1, but in this case $\mathcal{L}$ is the Laplace operator on the interior of $\Omega_{2}$,
\begin{itemize}
    \item the directional derivative in the direction of the outward pointing normal vector for $\mathbf{x}\in\Omega$ with $\lVert\mathbf{x}\rVert_{2}-\sqrt{4/(3\pi)}=0$ (i.e., on the circular portion of the boundary) and
    \item the identity map to enforce Dirichlet boundary conditions on the remainder of the boundary.
\end{itemize}
Further, the node refinement algorithm \ref{alg:barynodeadd} is modified to ensure that the sets of nearest neighbors, $\mathcal{N}_{k,n}$, are selected to contain the point of singularity in the derivatives either on the boundary of or outside of the convex hull of the set.  This is implemented by choosing for $\mathcal{N}_{k,n}$ the $n$ points in $\mathcal{S}$ closest to $\mathbf{x}_{k}$ that also satisfy $\mathbf{x}_{k,j}^{T}\mathbf{x}_{k}\geq0$.  That is, the points in $\mathcal{N}_{k,n}$ are on the same side as $\mathbf{x}_{k}$ of the line through the origin that is perpendicular to $\mathbf{x}_{k}$.

In all cases presented here, $\varphi(r)=r^3$ is used as the kernel.  The degree, $m$, of the supplemental polynomial basis ranges from $2$ to $4$, while $\mu$ takes on the values $1$ or $2$ for those refinement indicators that require it.  The number of nearest neighbors in the set $\mathcal{N}_{k,n}$ is set to $n=2M_{d,m}$, a choice that has been demonstrated to mitigate loss of accuracy near the boundary (see, e.g., \cite{JARBF2017}).  Experiments also consider eleven different tolerances ($\varepsilon$) on the refinement indicators, ranging from $10^{-4}$ to $10^{-2}$.  The logarithms of these tolerances are equally spaced.  For each combination of $m$, $\mu$, and $\varepsilon$, the adaptive algorithm \ref{alg:akba} is run ten separate times with the set $\mathcal{S}^{(0)}$ randomly perturbed from the sets of points illustrated in figure \ref{fig:InitialNodeSets}.  Except for those appearing at corners of these domains, each these nodes is perturbed by a (Euclidean) distance of $10^{-2}$ in a random direction, while ensuring that a node that starts on the boundary remains on the boundary.  Computation times, total numbers of nodes, and a metric indicating how well each refinement indicator predicts the actual forward error in the solution are collected at the conclusion of algorithm \ref{alg:akba} for each combination of $m$, $\mu$, and $\varepsilon$ and each initialization of the node sets. The characteristic spacing, $h_{k,n}^{(l)}$ is taken to be the minimum distance from $\mathbf{x}_{k}$ to any other point in $\mathcal{N}_{k,n}^{(l)}$ when solving \eqref{eq:weightrelationshipscaled} and constructing $G_{n}^{(l)}$.  Also, within algorithm \ref{alg:barynodeadd}, $\vartheta=10^{-3}$ and $\zeta=0.25$. Note that all computations presented here were performed on a workstation with two Intel$^\circledR$ Xeon$^\circledR$ CPU E5-2697 v3 processors, each running at 2.60GHz, and 256 GB of memory running MATLAB R2022b.

\begin{figure}
    \centering
    \includegraphics[width=0.5\linewidth]{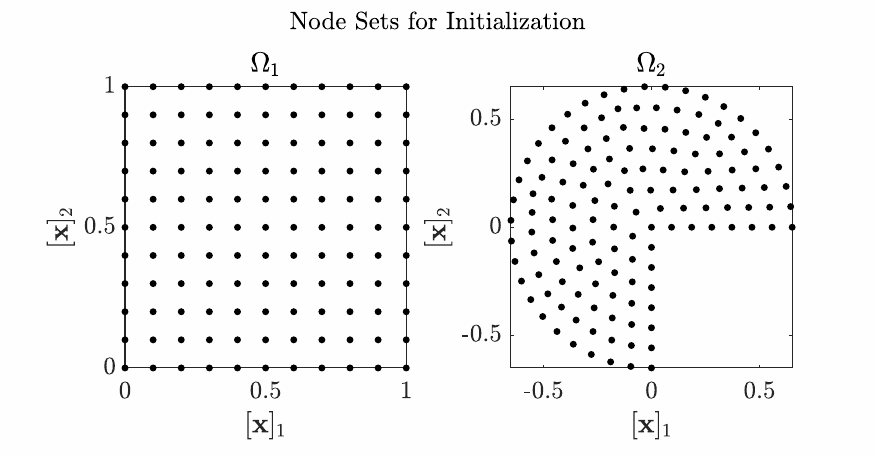}
    \caption{Initial node distributions used for the two-dimensional computational demonstrations on $\Omega_{1}$ (left) and $\Omega_{2}$ (right).}
    \label{fig:InitialNodeSets}
\end{figure}

\subsubsection{Demonstration of Decay in the Entries of $(G_{n}D_{n,m})^{-1}$} \label{sec:EntryDecay}

The $k\textsuperscript{th}$ row of $(G_{n}D_{n,m})^{-1}$ can be computed by solving a system of linear equations with coefficient matrix $(G_{n}^{(l)}D_{n,m}^{(l)})^{T}$ and the $k\textsuperscript{th}$ column of the identity matrix as the right side. For each row, in turn, a function that interpolates the set of ordered pairs
\begin{align}
    \left\{\left(\lVert\mathbf{x}_{i}-\mathbf{x}_{k}\rVert_{2},\frac{\lvert[(G_{n}^{(l)}D_{n,m}^{(l)})^{-1}]_{ki}\rvert}{\max\limits_{j=1,2,\ldots,N^{(l)}}\lvert[(G_{n}^{(l)}D_{n,m}^{(l)})^{-1}]_{kj}\rvert}\right)\right\}_{i=1}^{N}\nonumber
\end{align}
can be constructed and evaluated at a set of fixed distances $\{\chi_{j}\}_{j=1}^{n'}$ (independent of $k$). Denoting this interpolating function for the $k\textsuperscript{th}$ row as $\omega_{k}^{(l)}(\chi)$, with $\chi$ representing distance, figure \ref{fig:MinvDecay} illustrates that $\lvert[(G_{n}^{(l)}D_{n,m}^{(l)})^{-1}]_{ki}\rvert$ decreases exponentially as $\chi$ increases.  The dashed curves pass through the points of $(\chi_{j},\max\limits_{k=1,2,\ldots,N^{(l)}}\omega_{k}^{(l)}(\chi_{j}))$ at each level of refinement, while the solid curves demonstrate the weighted average
\begin{align}
    \frac{\sum\limits_{k=1}^{N^{(l)}}h_{k,n}^{(l)}\omega_{k}^{(l)}(\chi_{j})}{\sum\limits_{k=1}^{N^{(l)}}h_{k,n}^{(l)}}
\end{align}
at each distance, $\chi_{j}$.  This exponential decrease in the size of $(G_{n}^{(l)}D_{n,m}^{(l)})^{-1}$ with distance is precisely what is desired for an algorithm to be able to realize improvements in accuracy through local changes in spacing as indicated in section \ref{sec:MatInverse}.

\begin{figure}
    \centering
    \includegraphics[width=\linewidth]{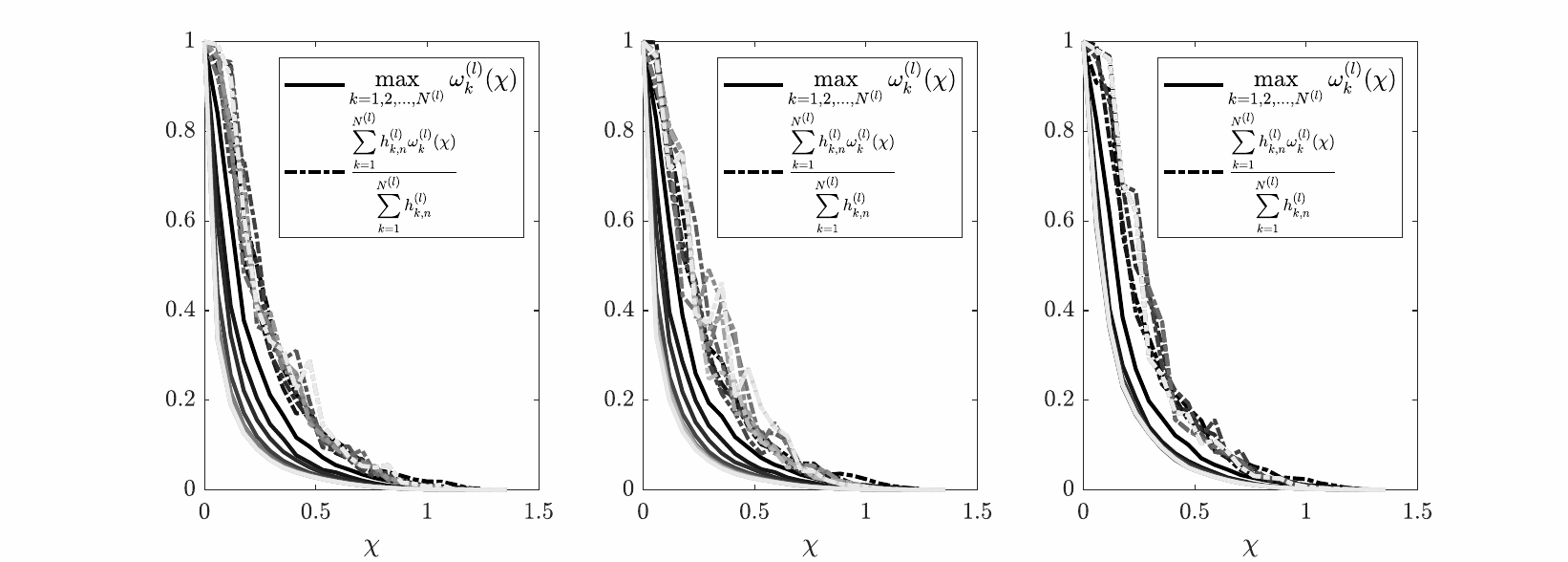}
    \caption{Computational evidence of exponential decay of normalized entries in the rows of $(G_{n}^{(l)}D_{n,m}^{(l)})^{-1}$ with $\rho=3$, $m=1$ and $\mu=2$.  Columns correspond to test function $u_{1}$, $u_{2}$ and $u_{3}$, respectively.  Darker shades of curves indicate lower levels, $l$, of the adaptive procedure.  }
    \label{fig:MinvDecay}
\end{figure}

\subsection{Comparisons With Existing Refinement Indicators}

First, figure \ref{fig:AdaptiveSpacing} illustrates that each refinement indicator (defined in section \ref{sec:Error_Estimation}), when used within algorithm \ref{alg:akba}, promotes a decrease in the characteristic spacing $h_{k,n}$ near rapidly changing features of the solution.  In the figure, log base 10 of the characteristic spacing $h_{k,n}$ at the conclusion of the adaptive algorithm \ref{alg:akba} is shown when applied to solving \eqref{eq:PDEexact} with solution $u_{1}$.  In this case the characteristic spacing is the minimum of the distance from $\mathbf{x}_{k}$ to any point in $\mathcal{N}_{k,n}^{(L)}$.
\begin{figure}
    \centering
    \includegraphics[width=\linewidth]{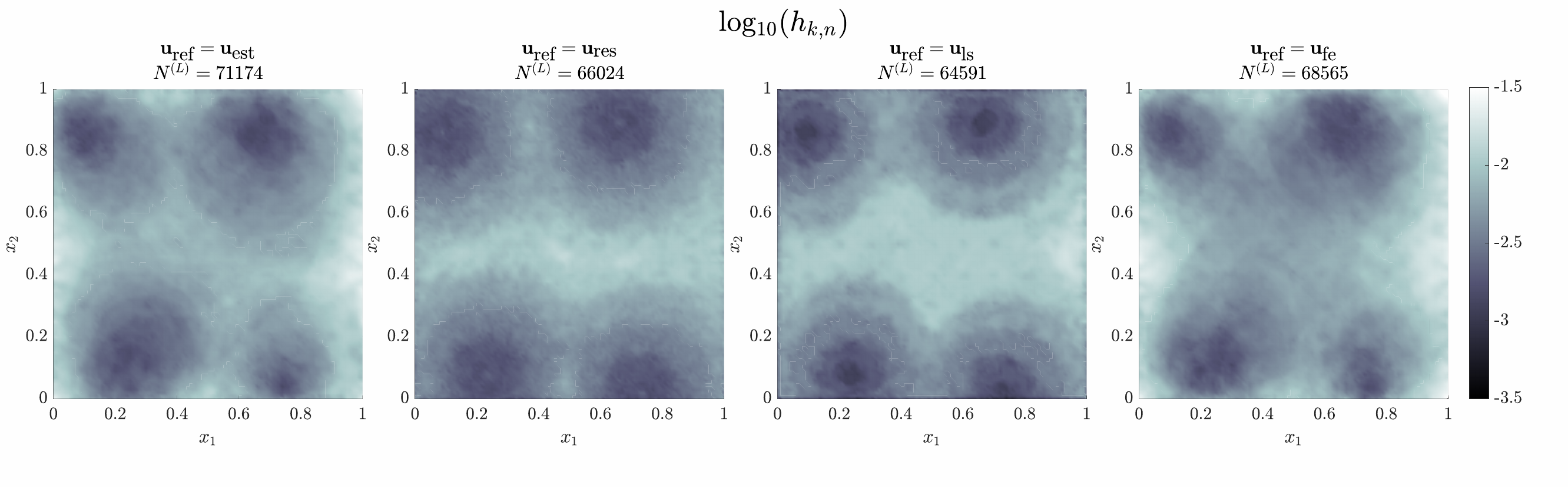}
    \caption{Log base 10 of the characteristic spacing $h_{k,n}$ at the conclusion of the adaptive algorithm \ref{alg:akba} when applied to solving \eqref{eq:PDEexact} with solution $u_{1}$ with $\rho=3$, $m=2$ and $\mu=1$.  Here the characteristic spacing is the minimum of the distance from $\mathbf{x}_{k}$ to any point in $\mathcal{N}_{k,n}^{(L)}$.  Notice that for each choice of $\mathbf{u}_{\mbox{ref}}$, the algorithm decreases spacing near the localized features of the solution, as desired.  To achieve values of $N^{(L)}$ that are nearly the same across refinement indicators, tolerances of $\varepsilon=10^{-3}$ ($\mathbf{u}_{\mbox{ref}}=\mathbf{u}_{\mbox{est}}$), $\varepsilon=10^{-4.8}$ ($\mathbf{u}_{\mbox{ref}}=\mathbf{u}_{\mbox{res}}$), $\varepsilon=10^{-3.25}$ ($\mathbf{u}_{\mbox{ref}}=\mathbf{u}_{\mbox{ls}}$) and $\varepsilon=10^{-3}$ ($\mathbf{u}_{\mbox{ref}}=\mathbf{u}_{\mbox{fe}}$) were chosen.}
    \label{fig:AdaptiveSpacing}
\end{figure}

Next, figures \ref{fig:SquareNumberofNodes} to \ref{fig:CircleNumberofNodes} demonstrate the performance of the error estimator analyzed in sections \ref{sec:ErrorApprox} and \ref{sec:Error_Estimation} when used as a refinement indicator.  This performance is illustrated in comparison to results generated utilizing the other refinement indicators described in section \ref{sec:Error_Estimation} within the algorithm \ref{alg:akba}.  Further, the actual forward error is introduced as an ideal refinement indicator (i.e., $\mathbf{u}_{\mbox{ref}}=\mathbf{u}_{\mbox{fe}}\vcentcolon=\lvert\mathbf{u}-\mathbf{u}_{n,m}\rvert$) since the intent is to utilize algorithm \ref{alg:akba} to determine a solution to \eqref{eq:PDEexact} with forward error less than a specified tolerance $\varepsilon$ through the node refinement strategy in algorithm \ref{alg:barynodeadd} where the indicator is too large.  In each of these figures, the horizontal axis represents the base ten logarithm of the maximum forward error in the solution at the conclusion of the algorithm \ref{alg:akba} for a set of eleven tolerances ($\varepsilon$) ranging from $10^{-4}$ to $10^{-2}$ for which their base ten logarithms are equally spaced.  That is, each tolerance is a value $\varepsilon$ for which $\log_{10}(\varepsilon)=-4+k/5$ for some $k\in\{0,1,\ldots,10\}$. Further, in each figure the colors of the curves (where available) indicate the refinement indicator, the markers signify different values of $m$, and the line styles distinguish different values of $\mu$ for those indicators where the choice of $\mu$ is relevant.  In all cases, the curves represent the average of the displayed quantity over the ten random initilizations of $\mathcal{S}^{(0)}$ for each tolerance.

For each of the test functions $u_{j}$, $j=1,2,3$, figure \ref{fig:SquareNumberofNodes} demonstrates that the number of nodes, $N$, required to achieve a particular maximum forward error increases as the error decreases.  Further, in agreement with the dependence of \eqref{eq:forwarderror} on the polynomial order, fewer nodes are required to achieve a particular level of error as $m$ increases.  Notably, each refinement indicator produces similar results with respect to the number of required nodes, which indicates some consistency for each test function relative to the number of nodes required to resolve its features.

\begin{figure}
    \centering
    \includegraphics[width=\linewidth]{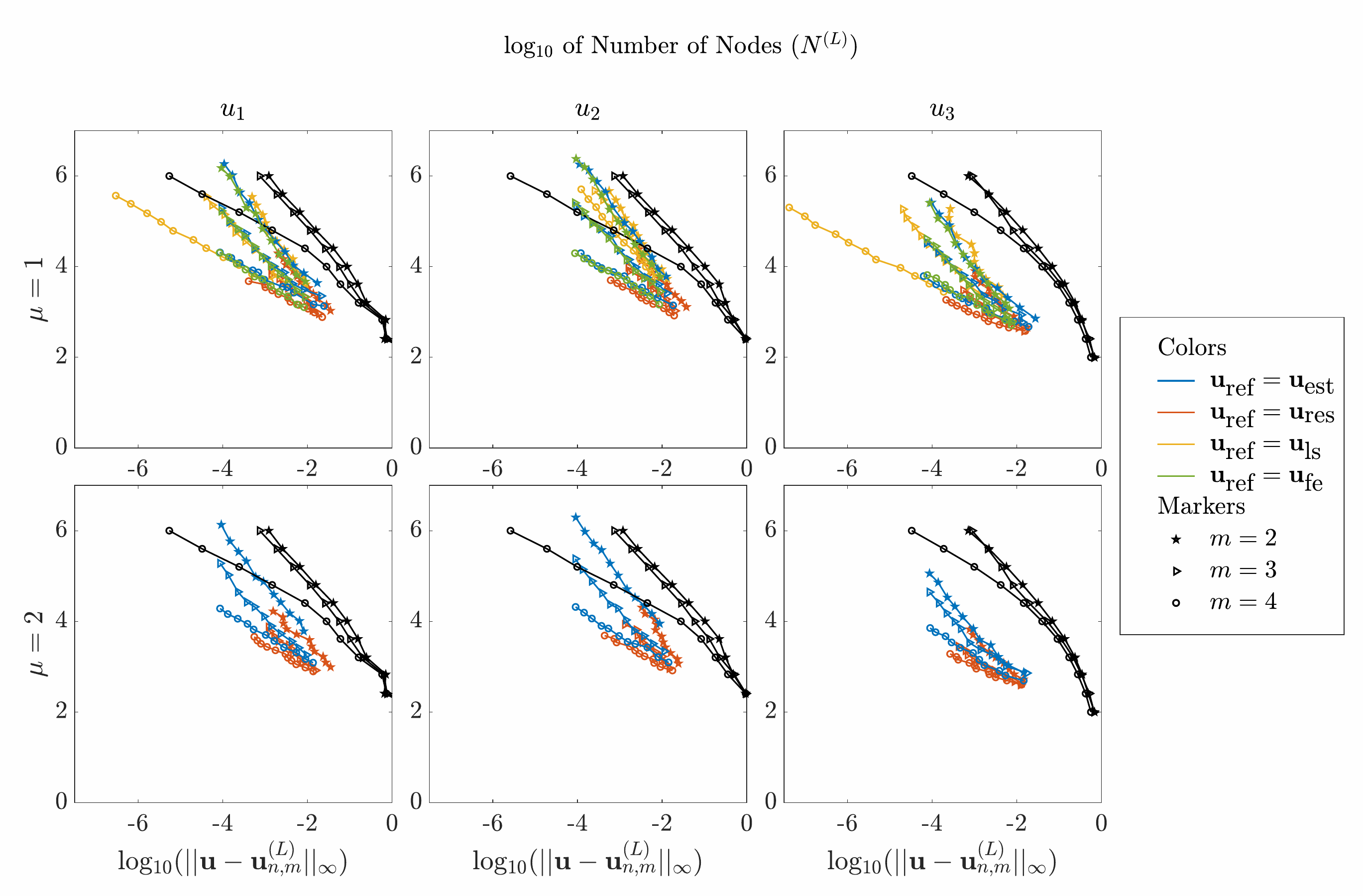}
    \caption{Base 10 logarithm of number of nodes, $N^{(L)}$, required to compute the solution to \eqref{eq:PDEexact} with base ten logarithm of the forward error indicated by the horizontal axis for $u_{j}$, $j=1,2,3$.  Notice that the number of nodes required is consistent with respect to the refinement indicator. In all frames, the black curves illustrate the relationship between the error and number of nodes when $N$ quasi-uniformly spaced nodes are used to solve the equation (i.e., there is no refinement).  For each fixed choice of $m$ and $\mu$ that is shown, the adaptive methods all outperform uniform refinement over this range of errors.}
    \label{fig:SquareNumberofNodes}
\end{figure}

The results illustrated in figure \ref{fig:SquareNumberofNodes}, and all subsequent figures, are generated by employing algorithm \ref{alg:akba} with the same set of tolerances.  Considering a single tolerance, e.g., $10^{-4}$ (indicated by the farthest left point on each curve in the figure), the maximum forward error in the solution at the conclusion of the adaptive algorithm can vary widely from one refinement indicator to another (except between when comparing $\mathbf{u}_{\mbox{est}}$ to $\mathbf{u}_{\mbox{fe}})$.  This can be explained by the results presented in figure \ref{fig:SquareErrorRatios}, which depicts
\begin{align}
   \max\limits_{k=1,2,\ldots,N}\frac{\left\lvert\left[\mathbf{u}-\mathbf{u}_{n,m}^{(L)}\right]_{k}\right\rvert}{\max\left\{\varepsilon,\left[\mathbf{u}_{\mbox{ref}}^{(L)}\right]_{k}\right\}},\label{eq:errorratio}
\end{align}
a quantity denoted the error ratio.  The error ratio considers, point-wise, the relative size of the forward error to the maximum of either the error estimate or the specified tolerance.  For a refinement indicator that also serves as an error estimator, this quantity should approach the value of one (or its base 10 logarithm 0), since a reliable estimator should provide a better approximation for the forward error as $h_{k,n}$ increases.  The indicator $\mathbf{u}_{\mbox{est}}$ is designed to perform this way, with \eqref{eq:errorestimator} indicating that the difference between the forward error and the estimate is exactly the forward error in the solution based on the inclusion of a polynomial basis of order $m+\mu$ instead of $m$.  Therefore, in figure \ref{fig:SquareErrorRatios} the base ten logarithm of the error ratio for $\mathbf{u}_{\mbox{est}}$ (and, of course, $\mathbf{u}_{\mbox{fe}}$) approaches $0$ as the maximum forward error decreases.  This, however, is not the case for $\mathbf{u}_{\mbox{ls}}$ or $\mathbf{u}_{\mbox{res}}$, since these indicators are not designed to estimate the forward error. In fact, the base ten logarithm of the error ratio diverges from the value of 0 for these two indicators.

\begin{figure}
    \centering
    \includegraphics[width=\linewidth]{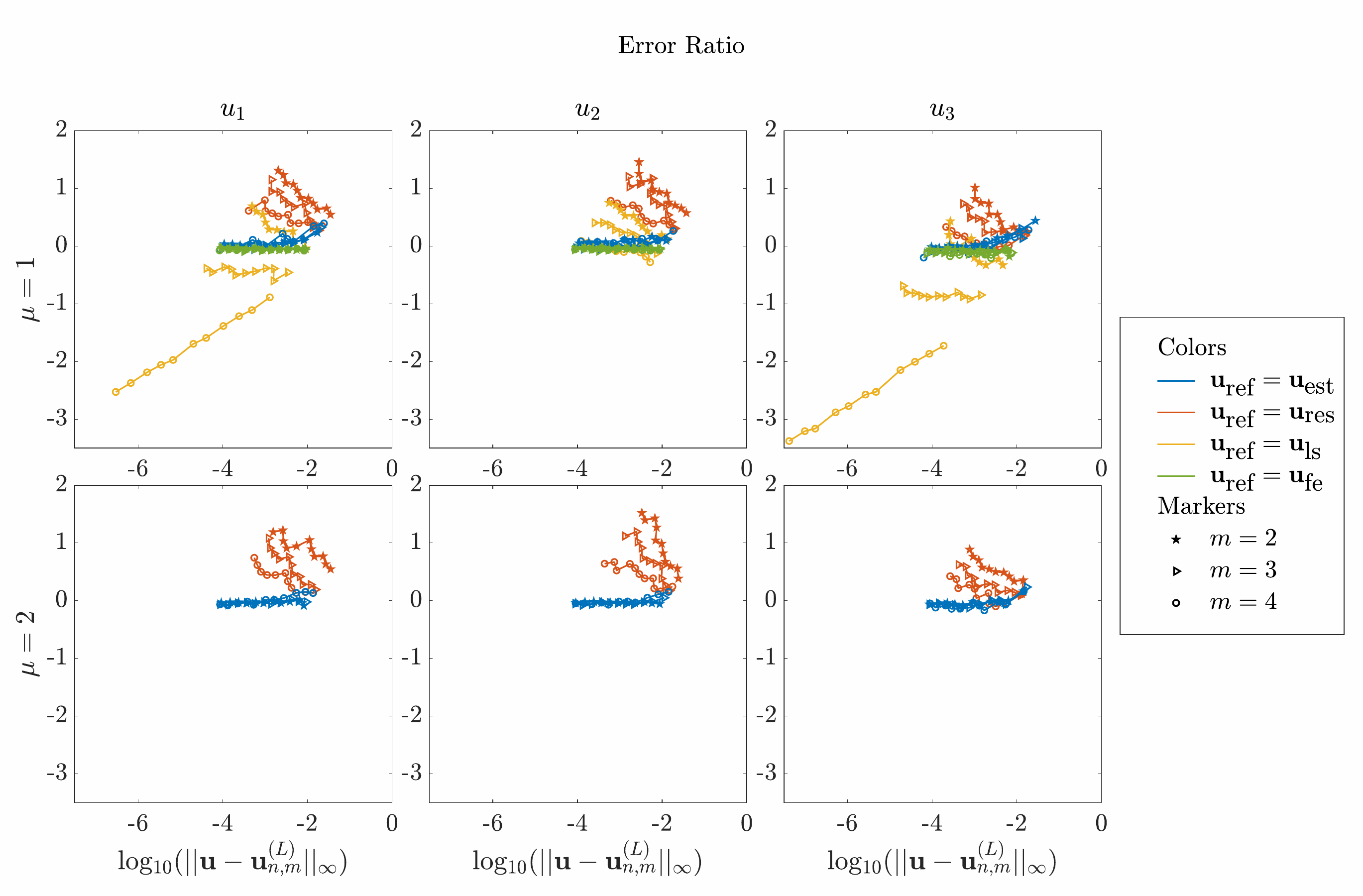}
    \caption{Base ten logarithm of the error ratio \eqref{eq:errorratio} when computing the solution to \eqref{eq:PDEexact} with base ten logarithm of the forward error indicated by the horizontal axis for $u_{j}$, $j=1,2,3$. Notice that for $\mathbf{u}_{\mbox{est}}$ and $\mathbf{u}_{\mbox{fe}}$ the error ratio approaches 1, i.e., the base ten logarithm approaches zero, as the forward error decreases.  This is not the case for either $\mathbf{u}_{\mbox{ls}}$ or $\mathbf{u}_{\mbox{res}}$.}
    \label{fig:SquareErrorRatios}
\end{figure}

The inability to estimate the forward error may not be a problem if a user of an adaptive algorithm only wishes to attempt to capture localized features of a solution to a PDE. However, proper selection of the tolerance, $\varepsilon$, to ensure the resolution of these features may be more difficult. That is, the deviation of the error ratio from the desired value of 1 can vary greatly depending on the function being approximated.  Even with $u_{1}$ and $u_{2}$ being similar visually, there is a clear difference when comparing the deviation of the base ten logarithm of the error ratio from zero in the left two subplots of figure \ref{fig:SquareErrorRatios}.  This might be explainable by the convergence characteristics of the power series of $u_{1}$ and $u_{2}$.  That is, the infinite radius of convergence of the power series of $u_{1}$ may explain the ability of $\mathbf{u}_{\mbox{ls}}$ to better capture the forward error than in the case of $u_{1}$ with a finite radius of convergence.  However, a thorough investigation of this idea has not been conducted.

Due to the ability to solve the two large systems of equations in line 10 of the algorithm \ref{alg:akba} in parallel, the computational cost (with respect to wall clock time) when using each of $\mathbf{u}_{\mbox{est}}$ and $\mathbf{u}_{\mbox{res}}$ is similar, as illustrated in figure \ref{fig:SquareComputationTimes}. For these two refinement indicators, the cost is also consistent when considering each of the test functions, in turn.  On the other hand, the cost of the algorithm when utilizing $\mathbf{u}_{\mbox{ls}}$ as the estimator is often greater than for the other two estimators.

\begin{figure}
    \centering
    \includegraphics[width=\linewidth]{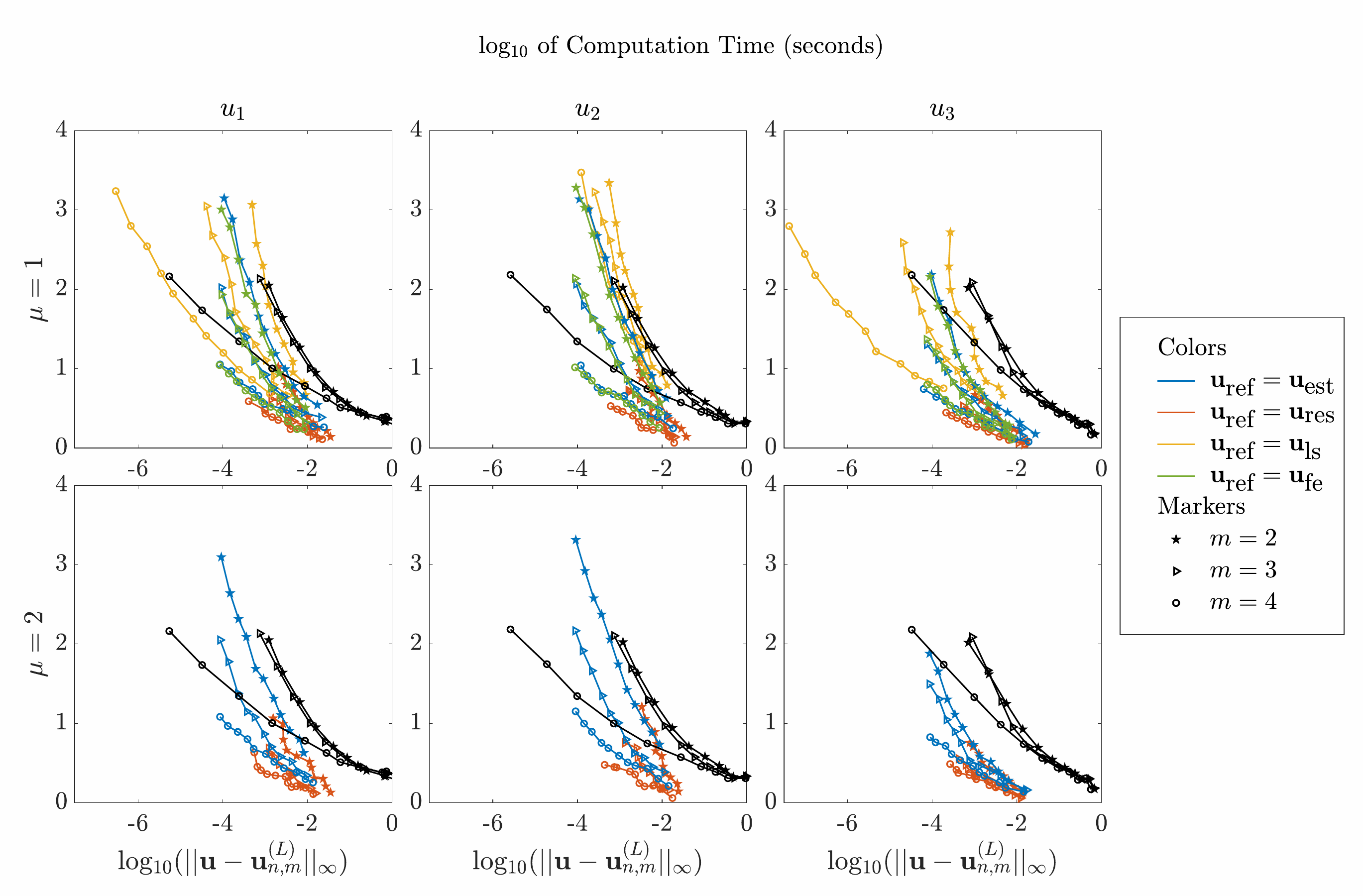}
    \caption{Base ten logarithm of computation times in seconds when computing the solution to \eqref{eq:PDEexact} with base ten logarithm of the forward error indicated by the horizontal axis for $u_{j}$, $j=1,2,3$.  The refinement indicator $\mathbf{u}_{\mbox{ls}}$ incurs greater computational cost as a consequence of the increased number of refinement levels required to achieve the prescribed tolerance as illustrated in figure \ref{fig:SquareLevels}.  In all frames, the black curves illustrate the relationship between the error and computation times when quasi-uniformly spaced nodes are used to solve the equation (i.e., there is no refinement).  For each fixed choice of $m$ and $\mu$ that is shown, the adaptive methods all outperform uniform refinement over this range of errors.}
    \label{fig:SquareComputationTimes}
\end{figure}

The increased computational cost when utilizing $\mathbf{u}_{\mbox{ls}}$ is due to an increased number of refinement levels required to achieve the prescribed tolerance as illustrated in figure \ref{fig:SquareLevels}.  Further, figures \ref{fig:SquareComputationTimes} and \ref{fig:SquareLevels} both demonstrate that for $\mathbf{u}_{\mbox{ls}}$ as the error indicator, the computational cost driven by the number of refinement levels can vary by orders of magnitude depending on the test function being considered.  Note that, in the demonstrations provided here, $l_{\mbox{max}}=100$ was never reached and the algorithm terminated when the refinement indicator met the desired tolerance at every node.

\begin{figure}
    \centering
    \includegraphics[width=\linewidth]{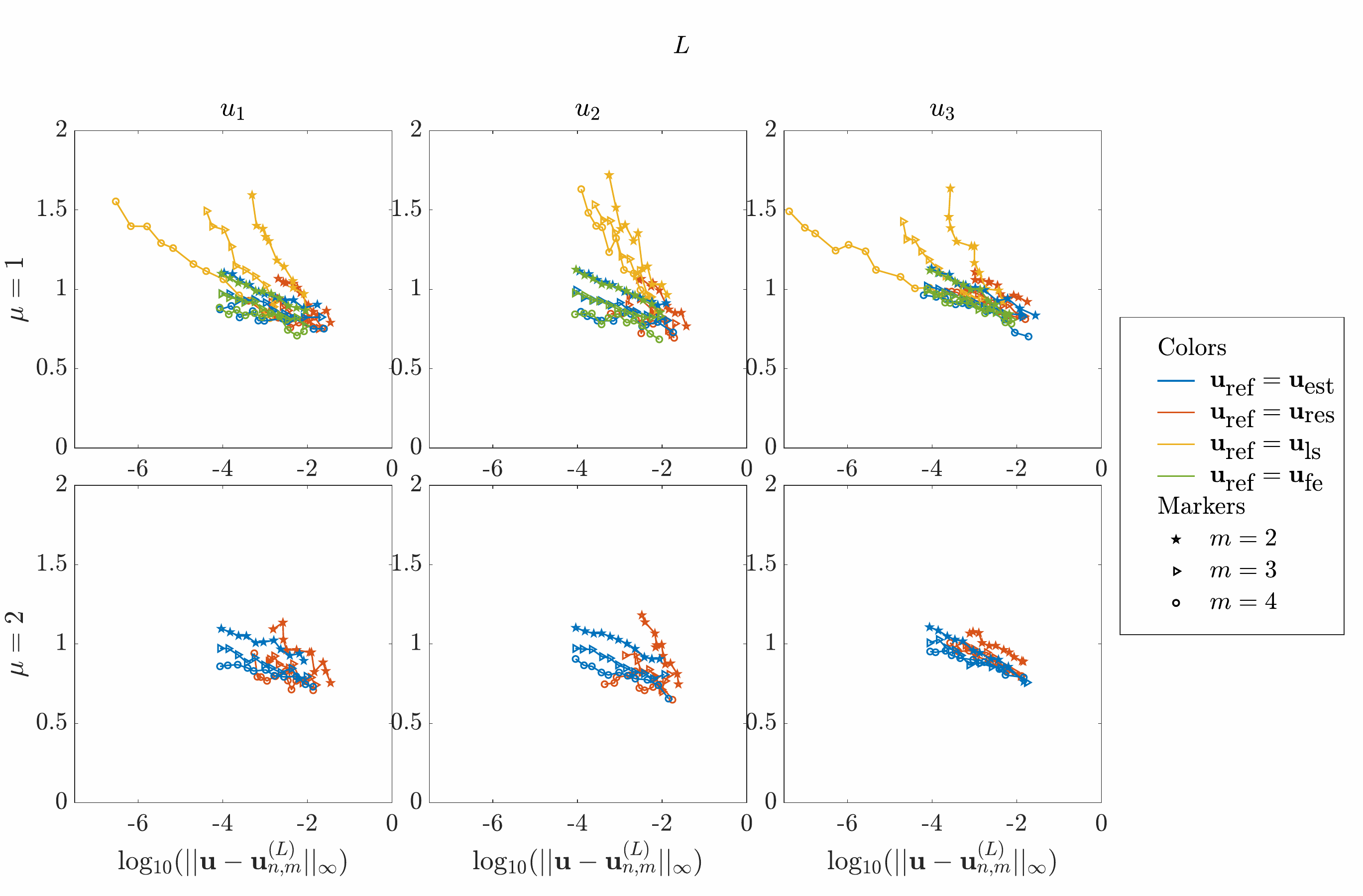}
    \caption{Base ten logarithm of the number of levels required to achieve the prescribed tolerance on the refinement indicator when computing the solution to \eqref{eq:PDEexact} with base ten logarithm of the forward error indicated by the horizontal axis for $u_{j}$, $j=1,2,3$.  The refinement indicator $\mathbf{u}_{\mbox{ls}}$ often requires a larger number of refinement levels.}
    \label{fig:SquareLevels}
\end{figure}

Beyond these demonstrations, to illustrate the dependence of the performance of the refinement indicators investigated here on the assumptions made to express the forward error as
\eqref{eq:forwarderror} (namely, the continuity of partial derivatives of $u$ to a particular order), figure \ref{fig:CircleNumberofNodes} provides evidence of the importance of these assumptions.  In particular, when $\varrho=1$ and $\mathbf{y}_{0}=\mathbf{0}$ (left frame of the figure) the test function $u_{4}$ has a discontinuity at the origin in its first derivative.  This is a violation of the assumptions required for the existence of the Taylor series expansion for all choices of $m$ and $m+\mu$ considered here.  Therefore, the number of nodes $N$ required to achieve a particular forward error is similar for each error indicator and regardless of $m$ and $\mu$.  However, if the singularity is moved outside the domain, as with middle frame of the figure, or if the singularity is in the second derivative instead of the first, as in the right-most frame, increases in $m$ can reduce the number of nodes required to achieve a particular maximum forward error since the assumptions are no longer violated for certain values of $m$ and $m+\mu$.

\begin{figure}
    \centering
    \includegraphics[width=\linewidth]{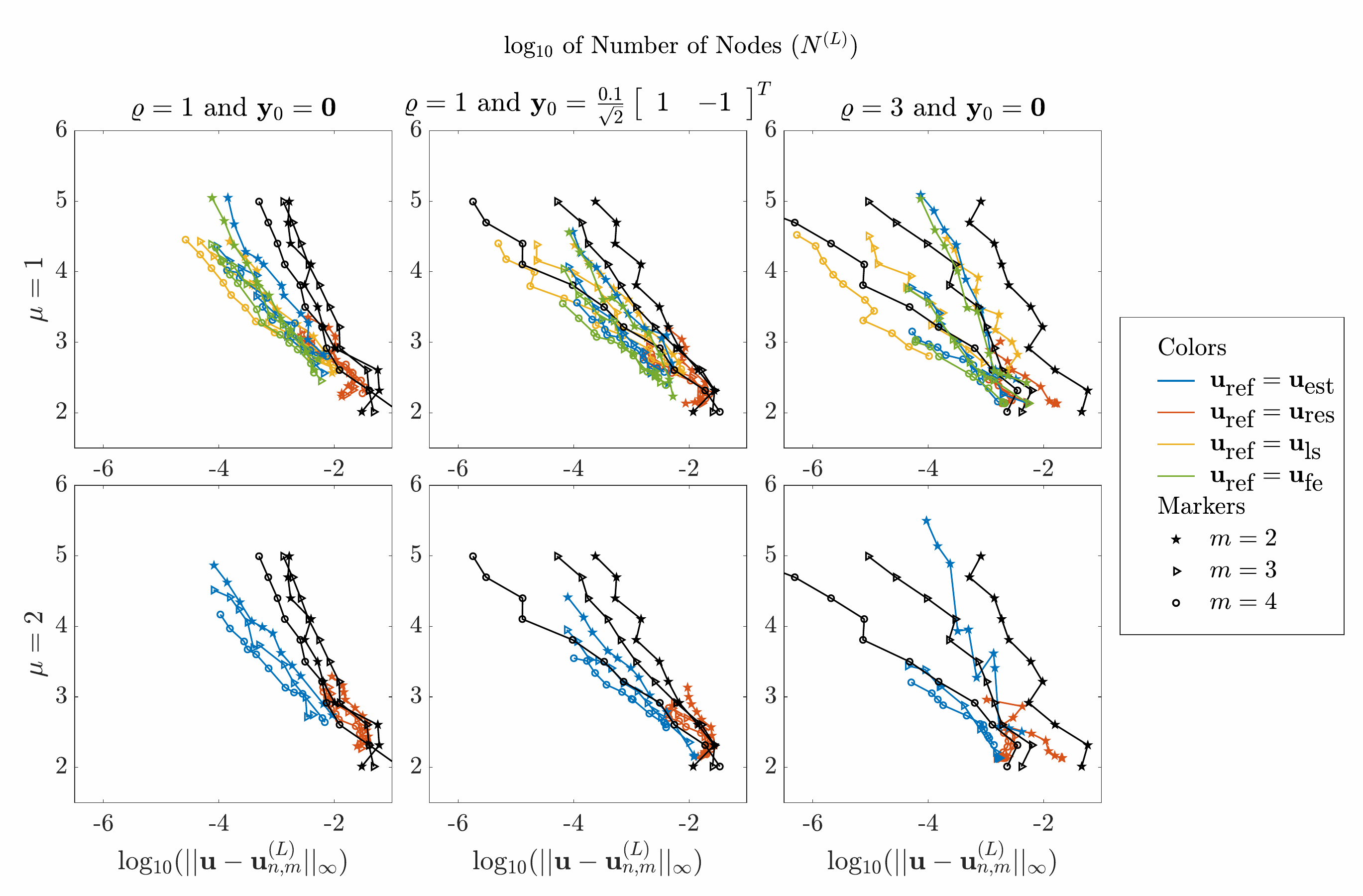}
    \caption{Base ten logarithm of number of nodes, $N^{(L)}$, required to compute the solution to \eqref{eq:PDEexact} with base ten logarithm of the forward error indicated by the horizontal axis for $u_{4}$ with various choices of $\varrho$ and $\mathbf{y}_{0}$.  Notice that the number of nodes required is consistent with respect to the refinement indicator.  However, in the left frames, a discontinuity in the derivatives with $\lvert\boldsymbol{\alpha}\rvert=1$ is at the origin.  This violates continuity assumptions required to express the dependence of \eqref{eq:forwarderror} and \eqref{eq:errorestimator} on the polynomial order, so that each indicator performs similiarly for all values of $m$ and $m+\mu$.  In the middle frames the discontinuity is shifted just outside the domain, so the dependence on $m$ becomes apparent.  The right frames illustrates the case where the discontinuity is then moved to derivatives with $\lvert\boldsymbol{\alpha}\rvert=2$, with the dependence on $m$ again becoming apparent.  In all frames, the black curves illustrate the relationship between the error and number of nodes when $N$ quasi-uniformly spaced nodes are used to solve the equation (i.e., there is no refinement). For each fixed choice of $m$ and $\mu$ that is shown, the adaptive methods all outperform uniform refinement over this range of errors.}
    \label{fig:CircleNumberofNodes}
\end{figure}

Finally, figure \ref{fig:CircleErrorRatios} serves to illustrate that the assumption that the smoothness of the solution, which impacts the radius of convergence of a power series of the solution, has an effect on the ability of the refinement indicator to estimate the forward error.  In these subplots, both the increase of $\varrho$ from $1$ to $3$ and the shift of $\mathbf{y}_{0}$ from the origin to $0.1/\sqrt{2}\left[\begin{array}{cc} 1 -1\end{array}\right]^{T}$ serve the purpose of increasing the smoothness (and ensuring the satisfaction of all assumptions) when considering $u_{4}$ on $\Omega_{2}$.  Clearly, the ability of $\mathbf{u}_{\mbox{ls}}$ to estimate the forward error is highly dependent on these changes.  On the other hand, $\mathbf{u}_{\mbox{est}}$ appears to adequately estimate the forward error in any case, while $\mathbf{u}_{\mbox{res}}$ is a poor estimator in all cases.

\begin{figure}
    \centering
    \includegraphics[width=\linewidth]{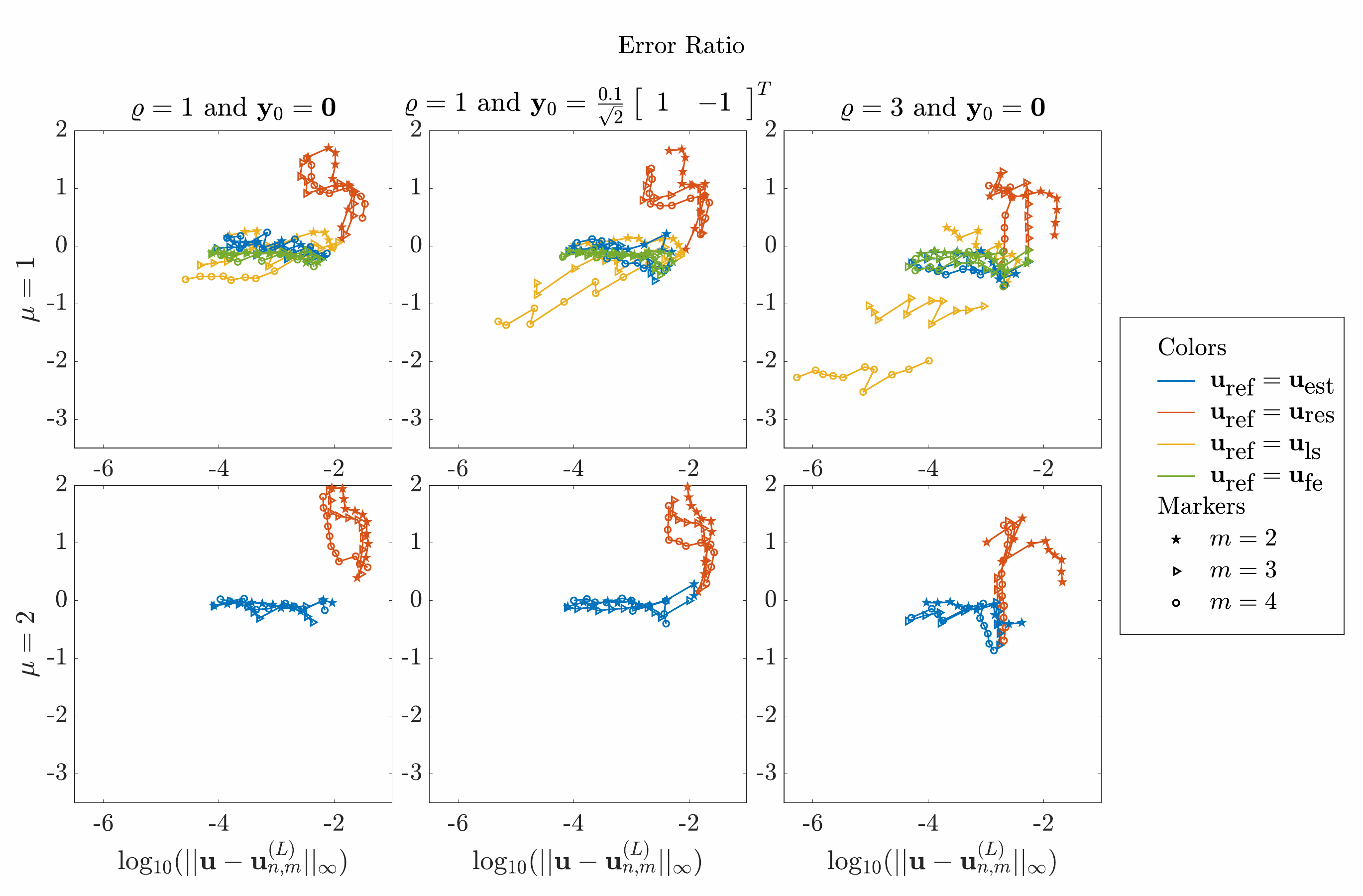}
    \caption{Base ten logarithm of the error ratio \eqref{eq:errorratio} when computing the solution to \eqref{eq:PDEexact} with base ten logarithm of the forward error indicated by the horizontal axis for $u_{4}$ with various choices of $\varrho$ and $\mathbf{y}_{0}$. Notice that for $\mathbf{u}_{\mbox{est}}$ and $\mathbf{u}_{\mbox{fe}}$ the error ratio approaches 1, i.e., the base ten logarithm approaches zero, as the forward error decreases.  This is not the case for either $\mathbf{u}_{\mbox{ls}}$ or $\mathbf{u}_{\mbox{res}}$.  Clearly, the ability of $\mathbf{u}_{\mbox{ls}}$ to estimate the forward error is highly dependent on the smoothness of the solution and location of its discontinuities.  On the other hand, $\mathbf{u}_{\mbox{est}}$ appears to adequately estimate the forward error in any case, while $\mathbf{u}_{\mbox{res}}$ is a poor estimator in all cases. }
    \label{fig:CircleErrorRatios}
\end{figure}

\begin{remark}
    Figures \ref{fig:SquareNumberofNodes}, \ref{fig:SquareComputationTimes} and \ref{fig:CircleNumberofNodes} also illustrate the relationship between the error and number of nodes/computation times when quasi-uniformly spaced nodes are used to solve the equation (i.e., there is no refinement).  For each fixed choice of $m$ and $\mu$ that is shown, the adaptive methods all outperform uniform refinement over this range of errors.
\end{remark}

\section{Conclusions} \label{sec:Conclusions}

This article presented an approach to approximating solutions to the Poisson equation, whereby the computational algorithm locally adapts the spacing of the discrete point set to achieve a prescribed tolerance on a local error estimator that is novel in relationship to kernel methods.  The use of kernel methods, in the context of RBF-FD, inspired the introduction of a node placement strategy designed to maintain the ``meshfree" properties of these approximations.  The computational experiments illustrate that existing refinement indicators provide similar performance to the estimator derived herein when considering the number of adaptively placed nodes that are required to achieve a prescribed tolerance on the indicator.  However, these existing indicators exhibit a complicated relationship with the forward error that can make it difficult to discern how well the actual solution is being approximated, even for low tolerances.  On the other hand, these same experiments have shown that the estimator, $\mathbf{u}_{\mbox{est}}$, and actual absolute error agree, particularly as the discrete node set is refined.

\printcredits

\bibliographystyle{cas-model2-names}

\bibliography{references}



\end{document}